\numberwithin{equation}{section} 
\newcommand{\citenote}[2]{\cite[#1]{#2}} 
\definecolor{mygreen}{RGB}{28,172,0} 
\definecolor{mylilas}{RGB}{170,55,241} 
\theoremstyle{definition}  
\newtheorem{theorem}{Theorem}[section]  
\newtheorem{definition}[theorem]{Definition}  
\newtheorem{lemma}[theorem]{Lemma}  
\begin{document}

\title{Exponential stability of finite-$N$ consensus-based optimization} 

\author{S. G\"ottlich\footnotemark[1], \; J. Heieck\footnotemark[1], \; A. Neuenkirch\footnotemark[1]}

\footnotetext[1]{University of Mannheim, Department of Mathematics, 68131 Mannheim, Germany (goettlich@math.uni-mannheim.de, jacob.heieck@uni-mannheim.de, neuenkirch@uni-mannheim.de).}

\date{\today}

\maketitle

\begin{abstract}
	We study the finite-agent behavior of Consensus-Based Optimization (CBO), a recent metaheuristic for the global minimization of a function, that combines drift toward a consensus estimate with stochastic exploration. While previous analyses focus on asymptotic mean-field limits, we investigate the stability properties of CBO for finite population size \( N \). Following a hierarchical approach, we first analyze a deterministic formulation of the algorithm and then extend our results to the fully stochastic setting governed by a system of stochastic differential equations. Our analysis reveals that essential stability properties, including almost sure and mean square exponential convergence, persist in both regimes and provides sharp quantitative estimates on the rates of convergence.
\end{abstract}

{\bf AMS Classification.} 34D35, 37H30, 90C26, 93D05 

{\bf Keywords:} stability analysis, Lyapunov function, multi-agent system, consensus-based optimization, derivative-free optimization \\

\section{Introduction}
\label{sec:introduction}

\subsection{Outline}
\label{sec:outline}

A central problem in applied mathematics is the global minimization of a potentially non-convex and non-smooth function $f: \mathbb{R}^D \to \mathbb{R}$, along with the search for its global minimizer $\mathbf{X}^{\star}\in\mathbb{R}^{D}$. Computing $f$ or $\mathbf{X}^{\star}$ is generally NP-hard, but practical instances often permit approximate solutions within reasonable accuracy and time.

Metaheuristics \cite{metaheuristics} have proven effective in tackling such optimization problems, orchestrating local improvements and global strategies through a mix of random and deterministic decisions. Prominent examples include Random Search \cite{random_search}, Particle Swarm Optimization (PSO) \cite{particle_swarm_optimization}, Wind-Driven Optimization \cite{wind_driven_optimization} and Ant Colony Optimization \cite{ant_colony_optimization}. Despite their empirical success, many metaheuristics lack rigorous mathematical guarantees for global convergence. Nevertheless, for some of them, such as Random Search, there exist probabilistic guarantees for global convergence \cite{random_search_convergence}.

Recently, Consensus-Based Optimization (CBO) \cite{CBO_origin, carrillo_analytical_framework_CBO} has emerged as a promising alternative, following the principles of metaheuristics while offering greater mathematical tractability. CBO models a finite set $X_t^1, \dots, X_t^N \in \mathbb{R}^{D}$ of $N\in\mathbb{N}$ agents as stochastic processes in time $t\geq0$ exploring the domain and forming a consensus. Their dynamics are driven by two competing forces: a drift term, which pulls agents toward a weighted consensus estimate $\nu_{f}^{\alpha}(X_{t})$ of $\mathbf{X}^{\star}$ given by
\begin{equation}\label{eq:consensus_point}
    \nu_{f}^{\alpha}(X_{t}) = \sum_{m=1}^{N} X_{t}^{m} \frac{\exp (-\alpha f(X_{t}^{m}))}{\sum_{k=1}^{N} \exp (-\alpha f(X_{t}^{k}))}
\end{equation}
with $\alpha>0$, contracting their convex hull; and a diffusion term, modeled as scaled Brownian motion in $\mathbb{R}^{D}$, enhancing exploration. Mathematically, the evolution of each agent $X_t^{n}$ is governed by the stochastic differential equation (SDE)
\begin{equation}\label{CBO_system_description} 
    d X_{t}^{n} = -\lambda\left(X_{t}^{n}-\nu_{f}^{\alpha}(X_{t})\right) d t + \sigma \left\|X_{t}^{n}-\nu_{f}^{\alpha}(X_{t})\right\| d W_t^{n}, \quad n,\ldots,N, 
\end{equation}
where $\lambda >0$ controls the strength of the drift towards the weighted consensus point $\nu_{f}^{\alpha}(X_{t})$, and $\sigma >0$ scales the noise term modeled by $D$-dimensional independent Brownian motions $W_t^{n}$. This formulation ensures a balance between exploitation of promising regions and exploration of the search space, ultimately leading agents to a near-optimal global consensus.

CBO has been applied in several different problem settings. Examples include constrained optimization, such as model predictive control \cite{constrained_CBO, CBO_for_model_predictive_control} and optimization on manifolds like the sphere \cite{fornasier_CBO_Sphere}, multi-objective optimization \cite{multi_objective_CBO}, as well as training neural networks \cite{carrillo_CBO_for_high_dimensional_ML_problems} and 3D human pose estimation in camera coordinates \cite{luvizon_CBO_3D_Human_Pose}. These examples show that CBO can be used to address a broad class of non-convex optimization problems.

The theoretical analysis of CBO has seen significant progress, particularly in the context of its mean-field limit. In \cite{CBO_origin}, the authors establish that as the number of agents $N$ tends to infinity and $\alpha$ approaches infinity (Laplacian principle \cite{laplace_principle}), the system converges arbitrarily close to the minimum. Furthermore, in \cite{fornasier_CBO_converge_globally}, rigorous results are provided to demonstrate that under suitable assumptions, the mean-field formulation of CBO converges globally to the minimizer $\mathbf{X}^{\star}$. These results offer deep insights into the large-population behavior of CBO but rely on asymptotic limits. Apart from the well-posedness of the finite-agent system, which has been established in \cite{carrillo_analytical_framework_CBO}, little is known about the theoretical properties of CBO for finite $N$ and finite $\alpha$. Some progress in this direction has been made for simplified variants of CBO where all agents are influenced by the same random fluctuations, effectively removing the individual stochasticity. While these works provide valuable insights into the convergence properties and error bounds \cite{convergence_first_order_CBO, convergence_first_order_time_discrete_CBO} of these simplified algorithms, they differ structurally from the classical CBO dynamics we study here, which model independent agent behavior more faithfully.

In this work, we take a next step towards further bridging this gap by analyzing the stability properties of the classical CBO for finite $N$ and finite $\alpha$. Our approach follows a hierarchical methodology, motivated by the existing work on deterministic particle-based systems. In the linear setting, convergence to consensus has been successfully shown for multi-agent systems \cite{ode_dynamics_on_graphs}, and strict Lyapunov functions have been established for consensus dynamics under directed connected graphs \cite{ode_global_stability, multi_agent_lyapunov}. These studies leverage a graph-theoretic interpretation of particle systems, wherein agents are represented as nodes and their interactions as weighted edges.

Building upon these insights, we adopt the following structured approach:
\begin{itemize}
    \item First, in section \ref{sec:deterministic_system} we study the deterministic system
    \begin{equation}\label{DCBO_system_description}
        d X_{t}^{n} = -\lambda\left(X_{t}^{n}-\nu_{f}^{\alpha}(X_{t})\right) d t, \quad n=1,\ldots,N,
    \end{equation}
    leveraging existing results from linear graph-theoretic consensus models to establish global exponential stability of the component-wise non-linear system (\ref{DCBO_system_description}) and its explicit Euler discretization for $N < \infty$ and $\alpha < \infty$.
    \item Finally, in section \ref{sec:brownian_system} we investigate the full system governed by a SDE
    \begin{equation}\label{CBO_version_system_description}
        d Z_{t}^{n} = -\lambda\left(Z_{t}^{n}-\nu_{f}^{\alpha}(Z_{t})\right) d t + \sigma \left(Z_{t}^{n}-\nu_{f}^{\alpha}(Z_{t})\right) \circ d W_t^{n}, \quad n=1,\ldots,N,
    \end{equation}
    which incorporates both drift and diffusion effects. Here, \( \circ \) denotes the element-wise (Hadamard) product \citenote{Definition 7.5.1}{hadamard_product}. We rigorously establish both almost sure and mean square exponential stability of the system \eqref{CBO_version_system_description} and its Euler-Maruyama discretization under suitable conditions on the parameters. In our analysis, we focus on the anisotropic diffusion term $\sigma (Z_{t}^{n}-\nu_{f}^{\alpha}(Z_{t}))$ \cite{anisotropic_convergence_CBO}, rather than the more general formulation $\sigma \|Z_{t}^{n}-\nu_{f}^{\alpha}(Z_{t})\|$, where $\|\cdot\|$ typically denotes the Euclidean norm $\|\cdot\|_2$ \cite{carrillo_analytical_framework_CBO, fornasier_CBO_converge_globally}. We restrict ourselves to the diffusion term $\sigma (Z_{t}^{n}-\nu_{f}^{\alpha}(Z_{t}))$ since the graphical interpretation of the system is sufficient to reflect its stability properties.
\end{itemize}
By systematically progressing from the deterministic case to the full stochastic system, we aim to gain a deeper understanding of CBO dynamics beyond the mean-field limit, providing new theoretical insights into its behavior for practical, finite-agent settings.

\subsection{Notation}
\label{sec:notation}

To establish a consistent notation for our analysis, we introduce the following definitions. We denote \( D \) as the dimension, and the collective state of all particles at time \( t \) by the vector \( X_t = (X_t^1, \ldots, X_t^N)^\top \in \mathbb{R}^{ND} \), where each particle \( X_t^n \) is represented as \( X_t^n = (x_t^{n,1}, \ldots, x_t^{n,D})^\top \in \mathbb{R}^{D},\ n=1, \dots, N \). 

We use \( I_N \) to denote the $(N\times N)$-identity matrix. Furthermore, we define \( \mathbf{0}_N := (0, \dots, 0)^\top \in \mathbb{R}^{N} \) and \( \mathbf{1}_N := (1, \dots, 1)^\top \in \mathbb{R}^{N} \) as the all-zero and all-one column vectors of size \( N \), respectively. Unless stated otherwise, we denote by \( \|\cdot\| \) the standard Euclidean norm.

\bigskip

\section{Purely Deterministic System}
\label{sec:deterministic_system}
	
We begin by examining the purely deterministic system (\ref{DCBO_system_description}). In this setting, the exploitative diffusion is neglected, focusing solely on drift forces that drive particles towards a consensus state. Before we will establish our stability results, we precisely define what we mean by stability in this context.

Intuitively, stability describes the persistence of solutions under small perturbations: if a system is slightly disturbed, it shall return to its original state. More formally, stability is often characterized through equilibrium points. 

\begin{definition}[Equilibrium Point \citenote{\text{Definiton 3.2}}{ode_lyapunov_stability}]\label{def:equilibrium points}
    The state \( X^{\star} \in \mathbb{R}^{N} \) is said to be an \textit{equilibrium point} of an ordinary differential equation (ODE) \begin{align}\label{dyn_syst-det-gen} dX_t=a(X_t)dt, \quad t \geq 0, \end{align} if once $X_t$ is equal to $X^{\star}$, it remains equal to $X^{\star}$ for all future times, i.e., $X^{\star}$ has to satisfy $a(X^{\star})=\mathbf{0}_N$.
\end{definition}

In particular, we distinguish between stability and exponential stability.

\begin{definition}[Stability for ODE \citenote{\text{Definiton 3.3 and 3.6}}{ode_lyapunov_stability}]\label{def:ode_stability}
    The equilibrium point \( X^{\star} = \mathbf{0}_N \) is said to be \textit{stable} if, for any \( R > 0 \), there exists \( r > 0 \), such that if the initial state satisfies \( \| X_0 \| < r \), then \( \| X_t \| < R \) for all \( t \geq 0 \). Otherwise, the equilibrium point is said to be \textit{unstable}. 
    If stability holds for any initial state $X_0$, the equilibrium point is said to be \textit{globally stable}.
\end{definition}

Exponential stability strengthens the notion of stability.

\begin{definition}[Exponential Stability for ODE \citenote{\text{Definition 3.5 and 3.6}}{ode_lyapunov_stability}]
    \label{def:ode_exponential_stability}
    The equilibrium point \( X^{\star} = \mathbf{0}_N \) is \textit{exponentially stable} if there exist constants \( c > 0 \) and \( \delta > 0 \) and some \( r > 0 \) such that \( \| X_0 \| < r \) implies
    \[
    \| X_t \| \leq c \, \| X_0 \| \, e^{-\delta t}, \quad  t \geq 0.
    \]
    If the inequality holds for all \( X_0 \in \mathbb{R}^N \), the equilibrium point is called \textit{globally exponentially stable}.
\end{definition}

Exponential stability not only ensures that solutions converge to the equilibrium, but also quantifies how fast this convergence occurs. More precisely, the distance to the equilibrium decays at least as fast as \( e^{-\delta t} \) for some rate \( \delta > 0 \).

Notice, that both definitions apply specifically to the equilibrium point $X^{\star}=\mathbf{0}_N$. The stability of other equilibrium points or entire sets of points can be analyzed through a suitable transformation of the state space.

\vspace{1em}

To assess the stability properties of equilibrium points, one may employ a fundamental mathematical tool: Lyapunov functions. These functions provide a systematic approach to determining whether solutions remain close to an equilibrium or converge to it over time.

\begin{definition}[Lyapunov functions for ODE \citenote{\text{Definiton 3.8}}{ode_lyapunov_stability}]\label{def:ode_lyapunov_function}
Let  \( V: \mathbb{R}^{N} \to \mathbb{R} \) be a function,
\begin{itemize}
    \item[(1)]   which is  positive definite, i.e., we have  $V(x)=0$ for  $x=\mathbf{0}_N$ and $V(x)>0$ else, 
    \item[(2)] which has continuous partial derivatives and
    \item[(3)]whose time derivative along any state trajectory of the system $dX_t=a(X_t)dt$ is negative semi-definite, i.e.,  
    \[ \forall t \geq 0: \quad 
    \dot{V}(X_t) := \frac{dV(X_t)}{dt} = \left( \nabla V(X_t) \right)^{\top}\frac{dX_t}{dt} = \left( \nabla V(X_t) \right)^{\top} a(X_t) \leq 0.
    \]
\end{itemize}
    Then \( V \) is said to be a \textit{Lyapunov function} for the system (\ref{dyn_syst-det-gen}).
\end{definition}

The reformulation in Definition \ref{def:ode_lyapunov_function} follows directly from the chain rule. Here, we exploit the fact that $V$ depends solely on the state $X_t$ and not explicitly on time $t$.

The core idea is to construct a scalar function $V$ that decreases along system trajectories. If such a function exists, it serves as a certificate for stability.

\begin{theorem}[Stability for ODEs \citenote{\text{Theorem 3.2}}{ode_lyapunov_stability}]\label{thm:ode_local_stability} Consider the system
\eqref{dyn_syst-det-gen} and assume that \( X^{\star} = \mathbf{0}_N \) is an equilibrium point.
If there exists an Lyapunov function $V$ for this system, then 
    \( X^{\star} = \mathbf{0}_N \) is globally stable.
\end{theorem}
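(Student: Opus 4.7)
The plan is to exploit the fact that along any trajectory the Lyapunov function $V$ cannot increase, and to translate this level-set confinement into the $\varepsilon$-$\delta$ statement of Definition~\ref{def:ode_stability}. First I would fix an arbitrary $R>0$ and consider the sphere $S_R=\{x\in\mathbb{R}^N:\|x\|=R\}$. Since $S_R$ is compact and does not contain the equilibrium $\mathbf{0}_N$, the continuity and positive definiteness of $V$ (items (1) and (2) of Definition~\ref{def:ode_lyapunov_function}) yield a strictly positive minimum
\[
m_R \;:=\; \min_{x\in S_R} V(x) \;>\; 0.
\]

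Next I would use continuity of $V$ at the origin together with $V(\mathbf{0}_N)=0$ to pick an $r\in(0,R)$ small enough that $\|x\|<r$ implies $V(x)<m_R$. Then, for any initial state with $\|X_0\|<r$, condition (3) of Definition~\ref{def:ode_lyapunov_function} gives $\dot V(X_t)\le 0$, and integrating yields
\[
V(X_t) \;\le\; V(X_0) \;<\; m_R \qquad\text{for all } t\ge 0.
\]

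The final step is a continuity/connectedness argument: because the trajectory $t\mapsto X_t$ is continuous (which is implicit in the ODE formulation of Definition~\ref{def:equilibrium points}), if $\|X_t\|$ ever reached the value $R$ there would be a first time $\tau\ge 0$ with $\|X_\tau\|=R$; but then $X_\tau\in S_R$ would force $V(X_\tau)\ge m_R$, contradicting the bound just derived. Hence $\|X_t\|<R$ for every $t\ge 0$, which is exactly the stability condition. Since $R$ was arbitrary and $V$ is defined on all of $\mathbb{R}^N$, the construction of $r$ is possible regardless of how large the surrounding ball is chosen, delivering the global version of stability in the sense of Definition~\ref{def:ode_stability}.

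The one step that requires genuine care — the main obstacle in an otherwise standard argument — is the compactness reduction that converts the pointwise condition $V(x)>0$ for $x\ne\mathbf{0}_N$ into the uniform strictly positive bound $m_R$ on $S_R$. Without this, the monotonicity $V(X_t)\le V(X_0)$ cannot be turned into a contradiction with $\|X_\tau\|=R$, and the entire scheme collapses; once $m_R>0$ is secured, the remaining chain of implications is essentially bookkeeping.
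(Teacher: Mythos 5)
The paper does not actually prove this statement: it is imported as a classical result (Theorem 3.2 of the cited reference), so there is no in-paper argument to compare against. Your proof is the standard textbook Lyapunov argument, and its core is correct: the compactness reduction giving $m_R=\min_{\|x\|=R}V(x)>0$, the choice of $r$ via continuity of $V$ at the origin together with $V(\mathbf{0}_N)=0$, the monotonicity $V(X_t)\le V(X_0)$ from $\dot V\le 0$, and the first-crossing contradiction together establish exactly the $(R,r)$ statement of Definition~\ref{def:ode_stability}.

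The one place where the argument does not deliver what is claimed is the final step, the passage to \emph{global} stability. Definition~\ref{def:ode_stability} declares the equilibrium globally stable when stability holds for \emph{any} initial state $X_0$, i.e.\ every trajectory, however far out it starts, must remain confined in some ball. Your construction produces, for each $R$, \emph{some} $r\in(0,R)$, but nothing forces $r$ to grow without bound as $R\to\infty$: if $V$ is bounded, say $\sup V=M<\infty$, then $m_R\le M$ for all $R$ while $V(X_0)$ may already exceed $\liminf_{R\to\infty}m_R$ for a moderately sized $X_0$, and the sublevel set $\left\{x:V(x)<V(X_0)\right\}$ can be unbounded, so monotonicity of $V$ does not trap the trajectory in any ball. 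The standard repair is to require $V$ to be radially unbounded ($V(x)\to\infty$ as $\|x\|\to\infty$), which makes sublevel sets compact and lets your argument run by choosing $R$ with $m_R>V(X_0)$. Since the paper's Definition~\ref{def:ode_lyapunov_function} does not include radial unboundedness, this gap is arguably inherited from the paper's own statement rather than introduced by you; still, as written, the sentence ``the construction of $r$ is possible regardless of how large the surrounding ball is chosen'' does not imply that an arbitrary $X_0$ is covered.
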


If $V$ has additional properties, we can obtain exponential stability.

\begin{theorem}[Exponential Stability for ODE \citenote{\text{Theorem 4.5}}{exponential_stability_ODE_2}]
    \label{thm:ode_exponential_stability}
    Consider the system
\eqref{dyn_syst-det-gen} and assume that \( X^{\star} = \mathbf{0}_N \) is an equilibrium point.
    Assume that \( V: \mathbb{R}^N \to \mathbb{R} \) is Lyapunov function (for this system), which has the following additional property:  there exist constants \( c_1, c_2, c_3, c_4 > 0 \) such that
    \begin{enumerate}
        \item[\textnormal{(i)}] \( c_1 \| x \|^2 \leq V(x) \leq c_2 \| x \|^2 \) for \( x \in \mathbb{R}^N \),
        \item[\textnormal{(ii)}] \( \dot{V}(X_t) \leq -c_3 \| X_t \|^2 \) for all \( t \geq 0\),
        \item[\textnormal{(iii)}] \( \| \nabla V(x) \| \leq c_4 \| x\| \) for \( x \in \mathbb{R}^N \).
    \end{enumerate}
    Then the equilibrium point \( X^{\star} = \mathbf{0}_N \) is globally exponentially stable with $c=\sqrt{\frac{c_2}{c_1}}$ and $\delta = \frac{c_3}{2c_2}$.
\end{theorem}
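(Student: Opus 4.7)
The plan is to chain the three hypotheses together into a scalar differential inequality for $V(X_t)$, solve it via a Gr\"onwall-type argument, and then pass back to $\|X_t\|$ using the quadratic sandwich in (i). Conditions (i) and (ii) carry the main weight; condition (iii) is not essential for the converse direction we need here (it typically shows up when one wants a converse Lyapunov theorem), but if preferred, it can be used to verify differentiability of $t \mapsto V(X_t)$ along trajectories through the chain rule.

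First I would observe that combining (i) and (ii) yields, for every $t \geq 0$,
\[
\dot V(X_t) \;\leq\; -c_3 \|X_t\|^2 \;\leq\; -\frac{c_3}{c_2}\, V(X_t),
\]
since $\|X_t\|^2 \geq V(X_t)/c_2$ from the right half of (i). This is a pointwise-in-$t$ scalar differential inequality of the form $\dot v \leq -\kappa v$ with $\kappa := c_3/c_2 > 0$, where $v(t) := V(X_t)$ is continuously differentiable in $t$ (by (2) in Definition \ref{def:ode_lyapunov_function} and the chain rule, with $a(X_t) = dX_t/dt$ continuous along trajectories).

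Next I would apply the integrating factor $e^{\kappa t}$ (equivalently, Gr\"onwall's lemma) to deduce $V(X_t) \leq V(X_0)\, e^{-\kappa t}$ for all $t \geq 0$. Using the left half of (i) on the left-hand side and the right half of (i) on the right-hand side, this turns into
\[
c_1 \|X_t\|^2 \;\leq\; V(X_t) \;\leq\; V(X_0)\, e^{-\kappa t} \;\leq\; c_2 \|X_0\|^2 \, e^{-\kappa t}.
\]
Dividing by $c_1$ and taking square roots gives
\[
\|X_t\| \;\leq\; \sqrt{\frac{c_2}{c_1}}\, \|X_0\|\, e^{-\frac{c_3}{2c_2} t},
\]
which is exactly the claimed estimate with $c = \sqrt{c_2/c_1}$ and $\delta = c_3/(2c_2)$. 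Since no smallness assumption on $\|X_0\|$ was used, the bound is valid for every $X_0 \in \mathbb{R}^N$, i.e., global exponential stability.

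Honestly, there is no serious obstacle: the argument is a textbook Lyapunov/Gr\"onwall chain. The only thing to be slightly careful about is the justification that $t \mapsto V(X_t)$ is absolutely continuous so that the differential inequality integrates cleanly, which is immediate from the $C^1$ regularity of $V$ in (2) of Definition \ref{def:ode_lyapunov_function} together with the continuity of the trajectory $X_t$ solving \eqref{dyn_syst-det-gen}. Condition (iii) can be mentioned but is redundant for this implication; I would flag this briefly rather than force it into the proof.
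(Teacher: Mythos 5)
Your argument is correct and complete: the chain $\dot V(X_t) \leq -c_3\|X_t\|^2 \leq -(c_3/c_2)V(X_t)$, followed by Gr\"onwall and the two-sided bound in (i), yields exactly the claimed constants $c=\sqrt{c_2/c_1}$ and $\delta=c_3/(2c_2)$. The paper does not prove this statement itself --- it is imported verbatim from the cited reference --- so there is nothing to compare against, but what you wrote is the standard textbook derivation and is the proof one would expect there. Your side remark that condition (iii) is not needed for this implication is also accurate; it is typically used for converse Lyapunov theorems or robustness under perturbations, and flagging it as redundant here rather than artificially working it in is the right call.
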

Lyapunov functions are a classical concept for deriving (exponential) stability of ODEs, and  we apply it to our dynamical system \eqref{DCBO_system_description}. During the course of our analysis it will turn out that an appropriate transformation of the state space strikingly clarifies the stability properties of the dynamics of \eqref{DCBO_system_description}.

\vspace{1em}

\subsection{System Description}
\label{sec:deterministic_system_description}

The purely deterministic system is given by
\begin{equation}\label{deterministic_basic_system_description}
    d X_{t}^{n} = -\lambda\left(X_{t}^{n}-\nu_{f}^{\alpha}(X_{t})\right) d t
\end{equation}
for $n=1, \ldots, N$. By definition of the consensus point $ \nu_{f}^{\alpha}(X_{t})$ in (\ref{eq:consensus_point}) we have 
\begin{equation*}
    d X_{t}^{n} = - \lambda \left( X_{t}^{n} - \sum_{m=1}^{N} a_m(X_t) X_{t}^{m} \right) d t 
\end{equation*}
for $n=1, \ldots, N$ with weights 
\begin{equation*}
a_{m}(X_t) = \frac{\exp (-\alpha f(X_{t}^{m}))}{\sum_{k=1}^{N} \exp (-\alpha f(X_{t}^{k}))}.
\end{equation*}
Note that $a_m=a_m(X_t)$ depends non-linearly on the particle vector $X_t$. In the following, we will often drop this dependence for notational easiness.

Since  
\begin{equation*}
\sum_{m=1}^{N} a_{m} = \sum_{m=1}^{N} \frac{\exp (-\alpha f(X_{t}^{m}))}{\sum_{k=1}^{N} \exp (-\alpha f(X_{t}^{k}))} = 1,
\end{equation*}
all weights sum up to $1$ and we can rewrite
\begin{equation*}
    d X_{t}^{n} = - \lambda \left( \sum_{\substack{m=1\\m\neq n}}^{N} a_m \cdot \left(X_{t}^{n} - X_{t}^{m} \right) \right) d t.
\end{equation*}
Considering all $N$ equations simultaneously and sorting them by spatial dimension $d$ yields
\begin{equation}\label{eq:deterministic_system_matrix_form}
    dX_t = d \begin{pmatrix}
        x_{t}^{1,1} \\ \vdots \\ x_{t}^{N,1} \\ \vdots \\ x_{t}^{1,D} \\ \vdots \\x_{t}^{N,D} 
    \end{pmatrix} 
    = - \lambda
    \begin{pmatrix}
        \sum\limits_{m=2}^{N} a_m \left(x_{t}^{1,1} - x_{t}^{m,1} \right) \\ 
        \vdots \\ 
        \sum\limits_{m=1}^{N-1} a_m \left(x_{t}^{N,1} - x_{t}^{m,1} \right) \\ 
        \vdots \\ 
        \sum\limits_{m=2}^{N} a_m \left(x_{t}^{1,D} - x_{t}^{m,D} \right) \\ 
        \vdots \\ 
        \sum\limits_{m=1}^{N-1} a_m \left(x_{t}^{N,D} - x_{t}^{m,D} \right) 
    \end{pmatrix} dt.
\end{equation}
Given the weights $a_1, \ldots, a_N$,  the evolution of each dimension is therefore independent of the others, as it depends only on the interactions between particles within the same dimension. Moreover, the weights $a_m$ are equal in all equations.

Thus, it will suffice to study the stability of the system in each dimension $d\in \{1,\ldots D\}$ on its own. If we consider the system
\begin{equation}\label{eq:deterministic_system_matrix_form_one_dimension}
    d \begin{pmatrix}
        x_{t}^{1,d} \\ \vdots \\ x_{t}^{N,d}
    \end{pmatrix} 
    = - \lambda
    \begin{pmatrix}
        \sum\limits_{m=2}^{N} a_m \left(x_{t}^{1,d} - x_{t}^{m,d} \right) \\ 
        \vdots \\ 
        \sum\limits_{m=1}^{N-1} a_m \left(x_{t}^{N,d} - x_{t}^{m,d} \right) \\ 
    \end{pmatrix} dt
\end{equation}
and introduce the notation $X_t^{:,d} = (x_{t}^{1,d},\ldots, x_{t}^{N,d})^{\top} \in \mathbb{R}^{N}$, the right-hand side of the corresponding system simplifies to
\begin{equation*}
    - \lambda \hat{L}(X_t) X_t^{:,d} dt,
\end{equation*}
where $\hat{L}(X_t)$ is defined as follows
\begin{equation}\label{eq:determinstic_L_hat}
    \hat{L}(X_t) := 
    \begin{pmatrix}
        1-a_1(X_t) & -a_2(X_t) & \cdots & -a_N(X_t) \\
        -a_1(X_t) & 1-a_2(X_t) & \cdots & -a_N(X_t) \\
        \vdots & \vdots & \ddots & \vdots \\
        -a_1(X_t) & -a_2(X_t) & \cdots & 1-a_N(X_t)
    \end{pmatrix}.
\end{equation}
The diagonal entries arise since the weights sum up to 1. This leads to the compact representation 
\begin{equation*}
    dX_t = -\lambda L(X_t)X_t dt
\end{equation*}
of system (\ref{eq:deterministic_system_matrix_form}) with 
\begin{equation}\label{eq:determinstic_L}
L(X_t) =
\begin{pmatrix}
    \hat{L}(X_t) & \mathbf{0}_{N \times N} & \cdots & \mathbf{0}_{N \times N} \\
    \mathbf{0}_{N \times N} & \hat{L}(X_t) & \cdots & \mathbf{0}_{N \times N} \\
    \vdots & \vdots & \ddots & \vdots \\
    \mathbf{0}_{N \times N} & \mathbf{0}_{N \times N} & \cdots & \hat{L}(X_t)
\end{pmatrix} \in \mathbb{R}^{DN \times DN}.
\end{equation}
Here, $\mathbf{0}_{N \times N}$ denotes the zero-matrix of dimension $N \times N$. This formulation emphasizes that each dimension $d$ can be treated separately, while still preserving the component-wise non-linear interactions between particles through the weights $a_1, \dots, a_N$.  

From a graph-theoretic perspective, the matrix $\hat{L}(X_t)$ can be seen as the Laplacian \cite{ode_global_stability} of a weighted complete graph, where the nodes represent the particles $x_{t}^{n,d}$ and the edges carry the interaction weights $-a_m(X_t)$ or $1-a_n(X_t)$. Each off-diagonal $-a_m(X_t)$ entry describes how strongly particle $n$ is influenced by particle $m$, while the diagonal terms ensure that the total influence on each particle is balanced. The motion of particle $n$ is exactly a weighted combination of the positions of all other particles, with higher weights meaning that $n$ is pulled more strongly toward those particular particles.

This framework links to the classical theory of multi-agent consensus, where stability has been fully characterized in the linear case \cite{ode_dynamics_on_graphs, ode_global_stability, multi_agent_lyapunov}. In contrast, the CBO system involves highly nonlinear weights $a_m(X_t)$, which require a refined analysis. The next section develops stability results for this nonlinear setting. In the following we will drop the dependence of $\hat{L} =\hat{L}(X_t)$ and $L=L(X_t)$ on $X_t$ for notational simplicity.

\vspace{1em}

\subsection{Global Exponential Stability}
\label{sec:deterministic_local_lyapunov_stability}

In this chapter, we establish the (global exponential) stability of the deterministic system (\ref{deterministic_basic_system_description}). As a first step, we determine the equilibrium points of said system. These correspond to solutions of the equation
\begin{equation*}
    -\lambda\left(X_{t}^{n}-\nu_{f}^{\alpha}(X_{t})\right) = \mathbf{0}_N \quad \Leftrightarrow \quad X_{t}^{n}=\nu_{f}^{\alpha}(X_{t})
\end{equation*}
for $n=1,\ldots,N$. Since the right-hand side depends on the entire particle configuration rather than on individual indices $n$, a necessary condition for an equilibrium is
\begin{equation}\label{eq:determnistic_fixed_point_necessary_conditions}
    X_{t}^{1} = \ldots = X_{t}^{N}.
\end{equation}
To verify whether this condition is also sufficient, we assume that all particles take the same value, i.e., equation \eqref{eq:determnistic_fixed_point_necessary_conditions} is satisfied. Given that $\nu_{f}^{\alpha}(X_{t})$ represents a weighted consensus point, we compute for any $n\in \{ 1,\ldots, N\}$:
\begin{equation}\label{eq:determnistic_fixed_point_sufficient_condition}
    X_{t}^{n} = \sum_{m=1}^{N} \frac{ X_{t}^{n}}{N} =\sum_{m=1}^{N} \frac{ X_{t}^{n} \exp (-\alpha f(X_{t}^{n}))}{\sum_{k=1}^{N} \exp (-\alpha f(X_{t}^{n}))} = \sum_{m=1}^{N} \frac{ X_{t}^{m} \exp(-\alpha f(X_{t}^{m}))}{\sum_{k=1}^{N} \exp (-\alpha f(X_{t}^{k}))} = \nu_{f}^{\alpha}(X_{t}).
\end{equation}
This confirms that the equilibrium points are uniquely characterized by the condition $X_{t}^{1} = \ldots = X_{t}^{N}$. Thus, all particles must reach a common consensus value at equilibrium, which belongs to the set
\begin{equation} \label{eq:deterministic_fixed_point_system} \mathcal{G}^{\star}= \left \{ x \in \mathbb{R}^{N\times D}: \, x^{1,d}= \ldots =x^{N,d}, d=1, \ldots, D\right \}. \end{equation}

\vspace{1em}

Having identified the equilibrium points, we now turn our attention to their stability properties. To this end, we focus on the matrix \( L \), which governs the dynamics of the system $dX_t = -\lambda L X_t  dt$. As previously indicated each \( d \in \{1, \ldots, D\} \) will be treated independently. We will therefore analyze the stability of the subsystem
\begin{equation}\label{eq:determinsitic_system_one_dimension}
dX_t^{:,d} = -\lambda \hat{L} X_t^{:,d}  dt
\end{equation} 
where the matrix $\hat{L}$ is given by equation \eqref{eq:determinstic_L_hat}.

Our first objective is to understand the spectral properties of \( \hat{L} \), as these determine the qualitative behavior of the system. More precisely, we will study
\begin{equation}\label{eq:determinstic_system_lyapunov_functional}
    \dot{V}(X_t^{:,d}) = ( \nabla V(X_t^{:,d}) )^{\top} a(X_t^{:,d}) = ( \nabla V(X_t^{:,d}) )^{\top} ( -\lambda \hat{L} X_t^{:,d} ) = -\lambda ( \nabla V(X_t^{:,d}) )^{\top} \hat{L} X_t^{:,d}
\end{equation}
for  a suitable function $V: \mathbb{R}^{d} \rightarrow [0, \infty)$.
For the inequality \( \dot{V}(X_t^{:,d}) \leq 0 \) to hold, we need to understand the eigenstructure of \( \hat{L} \), which reveals in which directions particles contract and in which directions motion persists or stagnates.

In order to compute the eigenvalues of \( \hat{L} \) explicitly, we make use of the following result on rank-one matrix updates, which allows for efficient determinant calculations.

\begin{lemma}[Rank-One Update \citenote{\text{p. 475}}{rank_one_update}]\label{lem:rank_one_update}
    If \( A \in \mathbb{R}^{N \times N} \) is non-singular, and if \( c \), \( d \) are \( N \)-dimensional column vectors, then:
    \begin{enumerate}
        \item[\textnormal{(i)}] \( \det (I_N + c d^{\top}) = 1 + d^{\top} c \),
        \item[\textnormal{(ii)}] \( \det (A + c d^{\top}) = \det(A) \left( 1 + d^{\top} A^{-1} c \right) \).
    \end{enumerate}
\end{lemma}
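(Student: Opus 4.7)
The plan is to first observe that part (ii) reduces to part (i) almost immediately. Indeed, since $A$ is non-singular, we can factor $A + cd^{\top} = A(I_N + A^{-1}c\,d^{\top})$, and then multiplicativity of the determinant together with part (i) applied to the vectors $A^{-1}c$ and $d$ yields
\[
\det(A + cd^{\top}) = \det(A)\cdot\det(I_N + (A^{-1}c)\,d^{\top}) = \det(A)\bigl(1 + d^{\top}A^{-1}c\bigr).
\]
So the real content lies in establishing (i).

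For part (i), my preferred approach is a Schur complement argument on the $(N+1)\times(N+1)$ block matrix
\[
M := \begin{pmatrix} I_N & -c \\ d^{\top} & 1 \end{pmatrix}.
\]
Since both diagonal blocks are invertible, I would evaluate $\det(M)$ in two different ways. Expanding via the Schur complement of the top-left block $I_N$ gives $\det(M) = \det(I_N)\cdot(1 - d^{\top}I_N^{-1}(-c)) = 1 + d^{\top}c$. Expanding via the Schur complement of the bottom-right scalar block gives $\det(M) = 1\cdot \det(I_N - (-c)\cdot 1^{-1}\cdot d^{\top}) = \det(I_N + cd^{\top})$. Equating the two expressions closes the argument.

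An alternative route, which I would mention as a sanity check, uses the spectral structure of the rank-one perturbation directly: the matrix $cd^{\top}$ has rank at most one, so its nonzero eigenvalue is $d^{\top}c$ (with eigenvector $c$, since $cd^{\top}c = (d^{\top}c)\,c$), while the $(N-1)$-dimensional subspace $\{v:d^{\top}v=0\}$ lies in its kernel. Hence the eigenvalues of $I_N + cd^{\top}$ are $1 + d^{\top}c$ together with $1$ of multiplicity $N-1$, and the determinant is $1 + d^{\top}c$.

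There is no genuine obstacle here; the only mild subtlety is the degenerate case $d^{\top}c = 0$ in the spectral argument, where one must be careful that $c$ need not be linearly independent from the kernel, but this is immediately handled by a continuity/density argument or, more cleanly, avoided altogether by using the Schur complement proof, which is valid without case distinctions.
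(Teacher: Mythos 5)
Your argument is correct. Note, however, that the paper does not prove this lemma at all: it is imported as a known result from the cited reference (p.~475 of \cite{rank_one_update}) and used as a black box in the proof of Lemma \ref{lem:eigenvalues_L_hat}, so there is no in-paper proof to compare against. Your reduction of (ii) to (i) via the factorization \( A + cd^{\top} = A(I_N + A^{-1}c\,d^{\top}) \) and your two-way Schur complement evaluation of \( \det\begin{pmatrix} I_N & -c \\ d^{\top} & 1 \end{pmatrix} \) together constitute the standard, fully rigorous derivation of the matrix determinant lemma, and both block expansions are applied correctly. Your handling of the spectral alternative is also sound: you rightly observe that when \( d^{\top}c = 0 \) the matrix \( cd^{\top} \) is nilpotent, so the eigenvalue count needs the extra care you mention (or a density argument), whereas the Schur route needs no case distinction. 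In short, the proposal is a complete and correct self-contained proof of a statement the paper only cites.
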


We now apply this lemma to derive the eigenvalues of \( \hat{L} \). Recall that \( \mathbf{1}_N \) denotes the all-ones column vector, and define the weight vector 
\begin{equation}
    \ell := (a_1, \ldots, a_N)^{\top} \in (0,1)^N,
\end{equation}
which collects the weights associated with each particle \( X_t^n \), \( n = 1, \ldots, N \).

\begin{lemma}[Eigenvalues of $\hat{L}$]\label{lem:eigenvalues_L_hat}
    The matrix \( \hat{L} \in \mathbb{R}^{N \times N} \) has exactly one eigenvalue equal to zero, while the remaining \( N-1 \) eigenvalues are equal to $1$, that is,
    \[
    \mu_1(\hat{L}) = 0, \quad \mu_2(\hat{L}) = \ldots = \mu_N(\hat{L}) = 1.
    \]
    A right-eigenvector corresponding to $\mu_1(\hat{L})$ is $\mathbf{1}_N$, a left-eigenvector is $\ell$.
\end{lemma}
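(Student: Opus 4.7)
The plan is to exploit the observation that $\hat{L}$ is a rank-one perturbation of the identity. Writing $\ell = (a_1, \ldots, a_N)^{\top}$, a direct inspection of \eqref{eq:determinstic_L_hat} yields the compact representation
\begin{equation*}
    \hat{L} = I_N - \mathbf{1}_N \ell^{\top},
\end{equation*}
which reduces the eigenvalue problem to that of the rank-one matrix $\mathbf{1}_N \ell^{\top}$. From this point onward everything follows from standard linear algebra; the only non-trivial ingredient is actually spotting the rank-one structure.

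First I would compute the characteristic polynomial by means of Lemma \ref{lem:rank_one_update}. For $\mu \neq 1$, the matrix $(1-\mu)I_N$ is invertible, so applying part (ii) of the lemma with $A = (1-\mu) I_N$, $c = -\mathbf{1}_N$ and $d = \ell$ gives
\begin{equation*}
    \det(\hat{L} - \mu I_N) = \det\bigl((1-\mu) I_N - \mathbf{1}_N \ell^{\top}\bigr) = (1-\mu)^N \left( 1 - \tfrac{1}{1-\mu}\,\ell^{\top} \mathbf{1}_N \right).
\end{equation*}
Using $\ell^{\top} \mathbf{1}_N = \sum_{m=1}^{N} a_m = 1$, this simplifies to $-\mu (1-\mu)^{N-1}$, so the eigenvalues are precisely $\mu_1(\hat{L}) = 0$ (simple) and $\mu_2(\hat{L}) = \cdots = \mu_N(\hat{L}) = 1$ (with algebraic multiplicity $N-1$).

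To finish, I would verify the claimed eigenvectors directly from the rank-one form. For the right-eigenvector, $\hat{L}\,\mathbf{1}_N = \mathbf{1}_N - \mathbf{1}_N (\ell^{\top} \mathbf{1}_N) = \mathbf{1}_N - \mathbf{1}_N = \mathbf{0}_N$, so $\mathbf{1}_N$ spans the kernel of $\hat{L}$. For the left-eigenvector, $\ell^{\top} \hat{L} = \ell^{\top} - (\ell^{\top} \mathbf{1}_N)\,\ell^{\top} = \mathbf{0}_N^{\top}$, which identifies $\ell$ as a left-eigenvector associated with eigenvalue $0$. Since $\ell \in (0,1)^N$ by construction of the weights $a_m$, none of these vectors is trivial, and the lemma is established. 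The main obstacle, if any, is merely recognizing the rank-one decomposition; once this is in place, the result is immediate and no delicate estimates are required.
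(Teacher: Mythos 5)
Your proof is correct and follows essentially the same route as the paper: the rank-one decomposition $\hat{L} = I_N - \mathbf{1}_N \ell^{\top}$, the determinant computation via Lemma \ref{lem:rank_one_update}(ii) with $A=(1-\mu)I_N$, and the direct verification of $\mathbf{1}_N$ and $\ell$ as right- and left-eigenvectors. Your simplification of the characteristic polynomial to $-\mu(1-\mu)^{N-1}$ is a slightly cleaner presentation of the same computation, but the argument is identical in substance.
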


\begin{proof}
    We begin by rewriting $\hat{L}$ using a rank-one update involving $\mathbf{1}_N$ and $\ell = (a_1,\ldots,a_N)^{\top}$:
    \begin{equation*}\label{eq:L_hat_as_rank_one_update}
        \hat{L} = 
        \begin{pmatrix}
            1-a_1 & -a_2 & \cdots & -a_N \\
            -a_1 & 1-a_2 & \cdots & -a_N \\
            \vdots & \vdots & \ddots & \vdots \\
            -a_1 & -a_2 & \cdots & 1-a_N
        \end{pmatrix} = I_N - \begin{pmatrix}
        a_1 & a_2 & \cdots & a_N \\
        a_1 & a_2 & \cdots & a_N \\
        \vdots & \vdots & \ddots & \vdots \\
        a_1 & a_2 & \cdots & a_N
        \end{pmatrix} = I_N - \mathbf{1}_N \ell^{\top}.
    \end{equation*}
    To determine the eigenvalues, we compute the characteristic equation by evaluating $\det(\hat{L} - \mu I_N) =0$. Using the rank-one update representation of $\hat{L}$ and applying (ii) of Lemma \ref{lem:rank_one_update} with $A = (1 - \mu) I_N$, $c=-\mathbf{1}_N$ and $d=\ell$, we obtain
    \begin{align*}
        \det(\hat{L} - \mu I_N) &= \det((1 - \mu) I_N) \left(1 -  \ell^{\top}((1 - \mu) I_N)^{-1}\mathbf{1}_N\right)= (1-\mu)^{N} \left(1 - \frac{1}{1 - \mu}  \ell^{\top}\mathbf{1}_N\right).
    \end{align*}
    Here, we use $A^{-1} = \frac{1}{1 - \mu} I_N$ and $\det(A) = (1-\mu)^{N}$. Since all weights sum to one, we have
    \begin{align*}
        \ell^{\top}\mathbf{1}_N = \sum_{n=1}^{N} a_n = 1.
    \end{align*}
    Substituting this into the equation gives
    \begin{align*}
        \det(\hat{L} - \mu I_N) &=  (1-\mu)^{N} \left(1 - \frac{1}{1 -\mu} \right).
    \end{align*}
    As a result, the determinant satisfies $\det(\hat{L} - \mu I_N) = 0$ if either $$(1-\mu)^{N} = 0\ \text{  or  }\ 1 - \frac{1}{1 - \mu} = 0.$$ The second conditions simplifies to $1 - \mu = 1$, which holds for $\mu=0$. The first condition implies $\mu=1$. Thus, the eigenvalues of $\hat{L}$ are $$\mu_1(\hat{L}) = 0,\quad \mu_2(\hat{L}) = \ldots = \mu_N(\hat{L}) = 1.$$
    
    For the corresponding eigenvectors to $\mu_1(\hat{L}) = 0$ we compute
    \begin{align*}
        \hat{L}\mathbf{1}_N = (I_N-\mathbf{1}_N \ell^{\top})\mathbf{1}_N = \mathbf{1}_N - \mathbf{1}_N \ell^{\top} \mathbf{1}_N = \mathbf{1}_N - \mathbf{1}_N = \mathbf{0}_N = 0 \cdot \mathbf{1}_N = \mu_1(\hat{L}) \cdot \mathbf{1}_N
    \end{align*}
    for the right-eigenvector and
    \begin{align*}
        \hat{L}^{\top}\ell = (I_N-\mathbf{1}_N \ell^{\top})^{\top} \ell = \ell - \ell \mathbf{1}_N^{\top} \ell = \ell - \ell = \mathbf{0}_N = 0 \cdot \ell = \mu_1(\hat{L}) \cdot \ell
    \end{align*}
    for the left-eigenvector. Thus, we conclude that both $\mathbf{1}_N$ and $\ell$ are right- and left-eigenvectors corresponding to the eigenvalue $\mu_1(\hat{L}) = 0$. The key property used in this derivation is that the sum of all weights equals one.
\end{proof}

Lemma \ref{lem:eigenvalues_L_hat} reveals that the matrix \( \hat{L} \) has a highly structured spectrum: it has exactly one eigenvalue equal to zero, while all remaining \( N - 1 \) eigenvalues are equal to one. The right-eigenvector associated with the zero eigenvalue is \( \mathbf{1}_N \), the all-ones vector.

By restricting condition (\ref{eq:determnistic_fixed_point_necessary_conditions}) to a fixed dimension \( d \), we find that all equilibrium points of the deterministic system $d X_t^{:,d} = - \lambda \hat{L} X_t^{:,d} dt$ satisfy
\begin{equation}\label{eq:equal_components_equilibrium}
    x_t^{1,d} = \ldots = x_t^{N,d},
\end{equation}
i.e., all particle components agree. Thus, any equilibrium point lies in the one-dimensional subspace spanned by the eigenvector \( \mathbf{1}_N \). This spectral structure admits a dynamical interpretation: the eigenspace associated with the zero eigenvalue corresponds to a direction in which the system does not evolve. Once all particles are aligned, the dynamics stops changing. In contrast, all directions orthogonal to \( \mathbf{1}_N \) correspond to the eigenvalue one and are actively contracted by the dynamics. As a result, the system is continuously pushed toward the subspace $$\mathcal{G}^{\star,d}= \{ x \in \mathbb{R}^N: \, x= \gamma \mathbf{1}_N, \, \gamma\in \mathbb{R} \}$$ of equilibrium points.

Motivated by these observations, we consider the function
\begin{equation}\label{eq:determinstic_system_lyapunov_function}
    V(X_t^{:,d}) = \left\| X_t^{:,d} - \bar{X}_t^{:,d} \mathbf{1}_N \right\|^2,
\end{equation}
where, \( \bar{X}_t^{:,d} \) denotes the average particle value in dimension \( d \), defined as
\begin{equation}\label{eq:average_particle_value_one_dimension}
    \bar{X}_t^{:,d} := \frac{1}{N} \mathbf{1}_N^{\top} X_t^{:,d} = \frac{1}{N} \sum_{n=1}^{N} x_t^{n,d}.
\end{equation}
Since $ \bar{X}_t^{:,d}$ is actually the orthogonal projection of $X_t^{:,d}$ onto $\mathcal{G}^{\star,d}$, we also have
$$ V(X_t^{:,d})= \textrm{dist}( X_t^{:,d}; \mathcal{G}^{\star,d})^2:= \inf_{y \in  \mathcal{G}^{\star,d}} \left \| X_t^{:,d} - y \right \|^2.$$
Thus, $V$ measures the squared distance of the particle system to its closest equilibrium point.  Clearly, the difference \( X_t^{:,d} - \bar{X}_t^{:,d} \mathbf{1}_N \in \mathbb{R}^{N}\) captures how far each individual particle deviates from the mean, so an alternative interpretation of $V$ is that this function  measures the squared distance of \( X_t^{:,d} \) to its mean value. However, note  that $V$ is not a classical Lyapunov function, since that we have $V(x)=0$ for all $x \in  \mathcal{G}^{\star,d}$.

Now define the auxiliary functions
$$ g :\mathbb{R}^N \rightarrow \mathbb{R}, \quad g(x)=x^{\top}x, \qquad   h :\mathbb{R}^N \rightarrow \mathbb{R}^N, \quad h(x)= x -\frac{1}{N}\mathbf{1}_N^{\top} x \mathbf{1}_N,$$
which satisfy
$$ \nabla g(x)=2x, \qquad J h(x)= I_N - \frac{1}{N} \mathbf{1}_N \mathbf{1}_N^{\top} $$
for $x \in \mathbb{R}^N$.
The expression for the Jacobi-matrix of $h$ follows from the observation
that
$$ \partial_{x_m} (h(x))_k = \partial_{x_m} \left(x_k  - \frac{1}{N}\mathbf{1}_N^{\top} x \right) = \mathbf{1}_{ \{ m=k \}} -  \frac{1}{N} $$
for $m,k=1, \ldots, N$. Here, \( \mathbf{1}_{ \{ m=k \} } \) equals $1$ when $m=k$ and $0$ otherwise.

Thus, we have
$$V(x)= g(h(x))$$
and so the chain rule yields
\begin{align*}
 \nabla V(x) & = \left( I_N - \frac{1}{N} \mathbf{1}_N \mathbf{1}_N^{\top} \right)^{\top} 2 \left( x -\frac{1}{N}\mathbf{1}_N^{\top} x  \mathbf{1}_N\right)  \\ &= 2 \left( x -\frac{1}{N}\mathbf{1}_N^{\top} x  \mathbf{1}_N \right)
 - 2  \frac{1}{N}\mathbf{1}_N \mathbf{1}_N^{\top} x + 2   \frac{1}{N^2}  \mathbf{1}_N \mathbf{1}_N^{\top} x \mathbf{1}_N^{\top} \mathbf{1}_N \\  &=
2 \left( x -\frac{1}{N}\mathbf{1}_N^{\top} x  \mathbf{1}_N \right).
 \end{align*}
Here we have used that $x^{\top} \mathbf{1}_N= \mathbf{1}_N^{\top}x$ is a scalar and $\mathbf{1}_N^T \mathbf{1}_N=N  $.

\vspace{1em}
Using this expression for $\nabla V$ in \eqref{eq:determinstic_system_lyapunov_functional}
we obtain
\begin{equation}\label{eq:deterministic_system_lyapunov_derivative_condition_applied}
    \dot{V}(X_t^{:,d}) = -2\lambda ( X_t^{:,d} - \bar{X}_t^{:,d} \mathbf{1}_N )^{\top} \hat{L} X_t^{:,d} 
\end{equation}
Now, we  will first rewrite and simplify the expression on the right-hand side of the above equality. To this end, we use the representation $\hat{L} = I_N - \mathbf{1}_N \ell^{\top}$ established in the proof of Lemma \ref{lem:eigenvalues_L_hat}. Moreover, we have $\bar{X}_t^{:,d} = \frac{1}{N} \mathbf{1}_N^{\top} X_t^{:,d}$, which yields
\begin{equation*}
    \hat{L} (\bar{X}_t^{:,d} \mathbf{1}_N) = (I_N - \mathbf{1}_N \ell^{\top}) \left(\frac{1}{N} \mathbf{1}_N^{\top} X_t^{:,d} \mathbf{1}_N\right) = \frac{1}{N} \mathbf{1}_N^{\top} X_t^{:,d} \mathbf{1}_N - \frac{1}{N} \mathbf{1}_N \ell^{\top} \mathbf{1}_N^{\top} X_t^{:,d} \mathbf{1}_N.
\end{equation*}
Since $\mathbf{1}_N^{\top} X_t^{:,d}$ is a scalar, we can rearrange terms and obtain
\begin{equation}\label{eq:nullsummand}
    \hat{L} (\bar{X}_t^{:,d} \mathbf{1}_N) = \frac{1}{N} \mathbf{1}_N^{\top} X_t^{:,d} \mathbf{1}_N - \frac{1}{N} \mathbf{1}_N^{\top} X_t^{:,d} \mathbf{1}_N \cdot \ell^{\top} \mathbf{1}_N = 0,
\end{equation}
where we used that the weights sum to one,
\begin{equation*}\label{eq:weights_sum_to_one}
    \ell^{\top} \mathbf{1}_N = \sum_{n=1}^{N} a_n = 1.
\end{equation*}

As a result, we can rewrite the total derivative as
\begin{equation*}
    \dot{V}(X_t^{:,d}) = -2\lambda ( X_t^{:,d} - \bar{X}_t^{:,d} \mathbf{1}_N )^{\top} \hat{L} X_t^{:,d} = -2\lambda ( X_t^{:,d} - \bar{X}_t^{:,d} \mathbf{1}_N )^{\top} \hat{L} ( X_t^{:,d} - \bar{X}_t^{:,d} \mathbf{1}_N ),
\end{equation*}
since the additional term vanishes due to \eqref{eq:nullsummand}. For brevity, we define the error vector $E_t^{:,d} := X_t^{:,d} - \bar{X}_t^{:,d} \mathbf{1}_N$ and compute
\begin{equation}\label{eq:lyapunov_functional_calculation}
    \begin{aligned}
        \dot{V}(X_t^{:,d}) &= -2\lambda (E_t^{:,d})^{\top} \hat{L} E_t^{:,d} \\
        &= -2\lambda (E_t^{:,d})^{\top} (I_N - \mathbf{1}_N \ell^{\top}) E_t^{:,d} \\
        &= -2\lambda (E_t^{:,d} )^{\top} E_t^{:,d} + 2\lambda (E_t^{:,d})^{\top} \mathbf{1}_N \ell^{\top} E_t^{:,d}.
    \end{aligned}
\end{equation}
To simplify the second term, we substitute back $E_t^{:,d} = X_t^{:,d} - \bar{X}_t^{:,d} \mathbf{1}_N$, use that $(X_t^{:,d})^{\top} \mathbf{1}_N$ is a scalar and $\mathbf{1}_N^{\top} \mathbf{1}_N = N$:
\begin{equation*}
    \begin{aligned}
        (E_t^{:,d})^{\top} \mathbf{1}_N \ell^{\top} 
        &= \left(X_t^{:,d} - \frac{1}{N} \mathbf{1}_N^{\top} X_t^{:,d} \mathbf{1}_N\right)^{\top} \mathbf{1}_N \ell^{\top} \\
        &= (X_t^{:,d})^{\top} \mathbf{1}_N \ell^{\top} - \frac{1}{N} \mathbf{1}_N^{\top} (X_t^{:,d})^{\top} \mathbf{1}_N \mathbf{1}_N \ell^{\top} \\
        &= (X_t^{:,d})^{\top} \mathbf{1}_N \ell^{\top} - \frac{N}{N} (X_t^{:,d})^{\top} \mathbf{1}_N \ell^{\top} \\
        &= \mathbf{0}_N^{\top}.
    \end{aligned}
\end{equation*}
Hence, the second term in \eqref{eq:lyapunov_functional_calculation} vanishes, and observing that ${V}(X_t^{:,d})=\|E_t^{:,d} \|^2$  we obtain
\begin{equation*}
    \dot{V}(X_t^{:,d}) = -2\lambda (E_t^{:,d} )^{\top} E_t^{:,d} = -2\lambda \|E_t^{:,d} \|^2 = -2\lambda V(X_t^{:,d}).
\end{equation*}
Solving this linear ODE yields
\begin{equation} \label{eq:deterministic_system_exponetial_decay} V(X_t^{:,d})= e^{-2\lambda t}  V(X_0^{:,d}), \qquad t \geq 0, \end{equation}
so this distance of the particle system to its subspace of equilibrium points decays exponentially with rate $\lambda$. 
\vspace{1em}

At this point, we require a change of coordinates, if we want to work in the setup of Definitions \ref{def:ode_stability} and \ref{def:ode_exponential_stability}, since we have a whole subspace $\mathcal{G}^{*,d}$ of equilibrium point instead of $x^{\star,d}=\mathbf{0}_N$. This is also reflected by the fact that $V$ can not be a Lyapunov function, since $V(x)=0$ for all $x \in \mathcal{G}^{\star,d}$ .

From  equation \eqref{eq:deterministic_system_exponetial_decay} we already know that
\begin{equation}\label{E-exp-stab} \|E_t^{:,d}\|^2 = e^{-2\lambda t}  \| E_0^{:,d}\|^2, \qquad t \geq 0, \end{equation}
so we will now turn to  $E_t^{:,d}  =   X_t^{:,d} - \bar{X}_t^{:,d}\mathbf{1}_N$, which is the dynamics of the particle system that is orthogonal to $\mathcal{G}^{:,d}$.  In fact, we have 
\begin{align*}
 {E}_t^{:,d} & =   X_t^{:,d} - \bar{X}_t^{:,d}\mathbf{1}_N  =  X_t^{:,d} -  \frac{1}{N} \mathbf{1}_N^T {X}_t^{:,d}  \mathbf{1}_N  =   \left( I_N - \frac{1}{N}  \mathbf{1}_N\mathbf{1}_N^T \right)  X_t^{:,d} 
\end{align*}
and so
\begin{align*}
 \dot{E}_t^{:,d} & =   -\lambda P  \hat{L} X_t^{:,d},
\end{align*}
where
$$ P= I_N - \frac{1}{N}  \mathbf{1}_N\mathbf{1}_N^T $$
is the matrix which corresponds to the projection on the orthogonal complement of $\mathcal{G}^{\star,d}$.
Since
\begin{align}\label{core-deterministic dynamics}
P \mathbf{1}_N = \mathbf{1}_N - \frac{1}{N}  \mathbf{1}_N\mathbf{1}_N^T  \mathbf{1}_N  = \mathbf{0}_N,
\end{align}
we have
\begin{align} \label{core-deterministic-2} P \hat{L}= P - P \mathbf{1}_N \ell^T= P. \end{align}
Thus, the dynamics of $ E_t^{:,d} $ is given by 
the linear ODE
$$ \dot{E}_t^{:,d} = -\lambda  {E}_t^{:,d},$$
which strikingly clarifies the dynamics of the particle system.
This ODE has the unique solution
\begin{align} \label{E-linear_ODE}  {E}_t^{:,d} = e^{-\lambda t}   {E}_0^{:,d}, \qquad t \geq 0, \end{align} which is obviously exponentially stable with rate $\lambda$ at the unique fixed point  ${E}^{\star,d}=\mathbf{0}_N$ and we trivially recover  equation \eqref{E-exp-stab}.

\vspace{1cm}

Since equation \eqref{E-linear_ODE} does not depend on the considered dimension $d$ of the particles, we have obtained the following result  for the original dynamics:

\begin{theorem}[Global Exponential Stability of the Deterministic System] \label{thm:deterministic_system_exponential_stability}
    The deterministic system $$d X_t = - \lambda L X_t dt$$ with $X_t = (x_{t}^{1,1},\ldots, x_{t}^{1,D},\ldots,x_{t}^{N,1},\ldots,x_{t}^{N,D})^{\top} \in \mathbb{R}^{DN}$ is \textbf{globally exponentially stable} with exact rate $\lambda$ on the set of equilibrium points 
   \begin{equation*}\mathcal{G}^{\star}= \left \{ x \in \mathbb{R}^{N\times D}: \, x^{1,d}= \ldots = x^{N,d}, \, d=1, \ldots, D\right \}. \end{equation*}

\end{theorem}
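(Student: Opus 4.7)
The plan is to leverage the decoupled structure of the system across spatial dimensions and then apply the coordinate change to $E_t^{:,d} = X_t^{:,d} - \bar{X}_t^{:,d}\mathbf{1}_N$ already developed in the preceding analysis, which maps the one-dimensional continuum of equilibrium points $\mathcal{G}^{\star,d}$ to the single point $\mathbf{0}_N \in \mathbb{R}^N$. In this transformed coordinate the dynamics becomes the linear ODE $\dot{E}_t^{:,d} = -\lambda E_t^{:,d}$, whose solution decays exponentially with rate exactly $\lambda$, and the result follows by reassembling the dimensions.

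Concretely, I would proceed in four steps. First, by the block-diagonal structure \eqref{eq:determinstic_L} of $L$, the dynamics across the $D$ spatial dimensions decouple, so it suffices to establish the claim for each subsystem \eqref{eq:determinsitic_system_one_dimension} independently. Second, for fixed $d$ the orthogonal projection of $X_t^{:,d}$ onto $\mathcal{G}^{\star,d}= \{ \gamma \mathbf{1}_N : \gamma \in \mathbb{R}\}$ is $\bar{X}_t^{:,d}\mathbf{1}_N$, so $\mathrm{dist}(X_t^{:,d}, \mathcal{G}^{\star,d}) = \|E_t^{:,d}\|$. Third, using $E_t^{:,d}= P X_t^{:,d}$ with $P = I_N - \tfrac{1}{N}\mathbf{1}_N\mathbf{1}_N^{\top}$ together with the identities \eqref{core-deterministic dynamics} and \eqref{core-deterministic-2}, the derivative $\dot{E}_t^{:,d} = -\lambda P \hat{L} X_t^{:,d} = -\lambda P X_t^{:,d} = -\lambda E_t^{:,d}$ yields the explicit solution $E_t^{:,d} = e^{-\lambda t} E_0^{:,d}$, so in particular $\|E_t^{:,d}\| = e^{-\lambda t}\|E_0^{:,d}\|$. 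Fourth, summing the squared norms across $d = 1,\ldots,D$ gives $\mathrm{dist}(X_t,\mathcal{G}^{\star})^2 = \sum_{d=1}^{D}\|E_t^{:,d}\|^2 = e^{-2\lambda t}\sum_{d=1}^{D}\|E_0^{:,d}\|^2 = e^{-2\lambda t}\,\mathrm{dist}(X_0,\mathcal{G}^{\star})^2$ for every $X_0 \in \mathbb{R}^{ND}$.

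The one genuine subtlety, rather than an obstacle, is that Definition \ref{def:ode_exponential_stability} formalizes exponential stability at the isolated equilibrium $X^{\star}=\mathbf{0}_N$, whereas here we face a whole continuum $\mathcal{G}^{\star}$ of equilibria. The natural reading of the statement is that there exist constants $c,\delta>0$ with $\mathrm{dist}(X_t,\mathcal{G}^{\star}) \leq c\,\mathrm{dist}(X_0,\mathcal{G}^{\star})\,e^{-\delta t}$ for all initial states, and the computation above gives this with $c=1$ and $\delta=\lambda$. Equivalently, one may view the orthogonal projection $P$ as a fixed change of variables reducing to a system whose only equilibrium is $\mathbf{0}_N$, and then apply Theorem \ref{thm:ode_exponential_stability} to the linear ODE for $E_t^{:,d}$ with the classical Lyapunov function $V(E)= \|E\|^2$: conditions (i)-(iii) hold with $c_1=c_2=1$, $c_3=2\lambda$, $c_4=2$, producing the rate $\delta = c_3/(2c_2)=\lambda$. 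Sharpness of $\lambda$ is immediate from the explicit solution $E_t^{:,d} = e^{-\lambda t}E_0^{:,d}$, whose exponent cannot be improved.
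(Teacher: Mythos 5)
Your proposal is correct and follows essentially the same route as the paper: decoupling by dimension, projecting via $P = I_N - \tfrac{1}{N}\mathbf{1}_N\mathbf{1}_N^{\top}$, using $P\hat{L}=P$ to reduce to the linear ODE $\dot{E}_t^{:,d}=-\lambda E_t^{:,d}$, and reading off the exact rate $\lambda$ from the explicit solution. Your explicit discussion of interpreting stability on the equilibrium manifold via the distance $\mathrm{dist}(X_t,\mathcal{G}^{\star})$ is a welcome clarification of a point the paper only addresses implicitly.
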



\subsection{Exponential Stability under Discretization}
In practical applications, usually the explicit Euler scheme
\begin{equation}\label{deterministic_basic_system_description-Euler}
     \widehat{X}_{(k+1)\Delta}^{n} =   \widehat{X}_{k\Delta}^{n} -\lambda\left( \widehat{X}_{k\Delta}^{n}-\nu_{f}^{\alpha}( \widehat{X}_{k\Delta}^{n})\right) \Delta, \qquad k=0,1, \ldots 
\end{equation}
with stepsize $\Delta>0$
is used for the simulation of 
\begin{equation*}
    d X_{t}^{n} = -\lambda\left(X_{t}^{n}-\nu_{f}^{\alpha}(X_{t})\right) d t
\end{equation*}
for $n=1, \ldots, N$. 
Proceeding as in Subsection \ref{sec:deterministic_system_description} it will again suffice to study the system in each dimension  $d\in \{1,\ldots D\}$ on its own. So, for 
$X_t^{:,d} = (x_{t}^{1,d},\ldots, x_{t}^{N,d})^{\top} \in \mathbb{R}^{N}$, the 
Euler-discretzation of
\begin{equation*}
    dX_t^{:,d}= - \lambda \hat{L}(X_t) X_t^{:,d} dt,
\end{equation*}
reads as
\begin{equation}\label{deterministic_basic_system_description-Euler-dimension}
     \widehat{X}_{(k+1)\Delta}^{:,d} =   \widehat{X}_{k\Delta}^{:,d} - \lambda \hat{L}(\widehat{X}_{k\Delta}) \widehat{X}_{k\Delta}^{:,d} \Delta, \qquad k=0,1, \ldots 
\end{equation}
where $\hat{L}(\widehat{X}_{k\Delta}) $, is defined as before, i.e., as
\begin{equation}\label{eq:determinstic_L_hat-Euler}
    \hat{L}(\widehat{X}_{k\Delta})  := 
    \begin{pmatrix}
        1-a_1(\widehat{X}_{k\Delta}) & -a_2(\widehat{X}_{k\Delta})  & \cdots & -a_N(\widehat{X}_{k\Delta})  \\
        -a_1(\widehat{X}_{k\Delta})  & 1-a_2(\widehat{X}_{k\Delta})  & \cdots & -a_N(\widehat{X}_{k\Delta})  \\
        \vdots & \vdots & \ddots & \vdots \\
        -a_1(\widehat{X}_{k\Delta})  & -a_2(\widehat{X}_{k\Delta})  & \cdots & 1-a_N(\widehat{X}_{k\Delta}) 
    \end{pmatrix}.
\end{equation}
Now we analyze also the dynamics of the Euler scheme on the orthogonal complement of $\mathcal{G}^{\star,d}$, that is of 
\begin{align*}
 \widehat{E}_{k \Delta}^{:,d} & =   \left( I_N - \frac{1}{N}  \mathbf{1}_N\mathbf{1}_N^T \right)  \widehat{X}_{k\Delta}^{:,d}.
\end{align*}
With
$$ P= I_N - \frac{1}{N}  \mathbf{1}_N\mathbf{1}_N^T  $$
we have
\begin{align*}
 \widehat{E}_{(k+1) \Delta}^{:,d} & =   P\widehat{X}_{(k+1)\Delta}^{:,d} \\
 & =  P \widehat{X}_{k\Delta}^{:,d} - \lambda P \hat{L}(\widehat{X}_{k\Delta}) \widehat{X}_{k\Delta}^{:,d} \Delta 
 \\ &=   \widehat{E}_{k \Delta}^{:,d}
 - \lambda P \widehat{X}_{k\Delta}^{:,d} \Delta + \lambda P   \mathbf{1}_N \ell(\widehat{X}_{k \Delta})^T \widehat{X}_{k \Delta}^{:,d} \Delta
 \\ &= (1-\lambda \Delta) \widehat{E}_{k \Delta}^{:,d}
\end{align*} where we have used again \eqref{core-deterministic dynamics}, i.e., $P \mathbf{1}_N= \mathbf{0}_N.$
So, we obtain
\begin{equation} \widehat{E}_{k\Delta}^{:,d} =  (1-\lambda \Delta)^{k} {E}_{0}^{:,d}, \qquad k=0,1, \ldots. \end{equation}
Since we have
$$ \ln(1-x) \leq -x, \qquad x \in [0,1),$$
we obtain 
that
 $$(1-\lambda \Delta)^{k} = \exp (k \ln (1- \lambda \Delta)) \leq \exp( -\lambda k \Delta)$$
 under the usual stability property $\Delta \lambda <1$ for the Euler scheme.
 This implies
 \begin{align}
  \|\widehat{E}_{k\Delta}^{:,d} \| \leq  e^{ -\lambda k \Delta} \|{E}_{0}^{:,d} \|,
\end{align}
so the Euler scheme preserves the globally exponential stability with rate $\lambda$ of the equilibrium points from Theorem \ref{thm:ode_exponential_stability} under $\lambda \Delta <1.$

\bigskip

\section{Stochastic System}
\label{sec:brownian_system}

We now turn to the fully stochastic formulation of CBO, which incorporates both deterministic drift and Brownian diffusion. To rigorously analyze stability in this setting, we extend the deterministic framework to stochastic differential equations of the form $d Z_t = a(Z_t) dt + b(Z_t) dW_t$. Our goal is to establish a result analogous to the global exponential stability previously shown in the deterministic setting. To this end, we introduce the necessary concepts for exponential stability.

\begin{definition}[Equilibrium Point \citenote{\text{Definiton 3.2}}{ode_lyapunov_stability}]\label{def:equilibrium_points_stoch}
    The state \( Z^{\star} \in \mathbb{R}^{N} \) is said to be an \textit{equilibrium point} of an stochastic differential equation (SDE) \begin{align}\label{dyn_syst-stoch-gen} d Z_t = a(Z_t) dt + b(Z_t) dW_t, \quad t \geq 0, \end{align} if once $Z_t$ is equal to $Z^{\star}$, it remains equal to $Z^{\star}$ for all future times, i.e., $Z^{\star}$ has to satisfy $a(Z^{\star})=\mathbf{0}_N$ and $b(Z^{\star})=\mathbf{0}_N$.
\end{definition}

\begin{definition}[Amost Sure Exponential Stability for SDE \citenote{\text{Section 4, Definition 3.1}}{sde_lyapunov_stability_mao}]\label{def:sde_exponential_stability}
    The equilibrium point \( Z^{\star} = \mathbf{0}_N \) is said to be \textit{almost surely globally exponentially stable} with rate $\delta$ if there exists a constant \( \delta > 0 \) such that for all \( Z_0 \in \mathbb{R}^{N} \) we have
    \[
    \mathbb{P} \left( \limsup_{t \to \infty} \frac{1}{t} \ln\left( \|Z_t\| \right) \leq -\delta \right) = 1.
    \]
\end{definition}

In addition to this \emph{path-wise} notion of stability, another fundamental concept is based on the behavior of the second moment. This leads to the idea of \emph{mean square stability}, where one does not analyze individual trajectories but instead considers the expected squared norm of the solution. 
	
\begin{definition}[Mean Square Exponential Stability for SDE \citenote{\text{Section 4, Definition 4.1}}{sde_lyapunov_stability_mao}]
    \label{def:sde_mean_exponential_stability}
    The equilibrium point \( Z^{\star} = \mathbf{0}_N \) is said to be \textit{mean square globally exponentially stable} with rate \( \delta \) if there exist constants \( c, \delta > 0 \) such that for all \( Z_0 \in \mathbb{R}^{N} \) we have
    \[
    \mathbb{E} \left( \|Z_t\|^2 \right) \leq c e^{-\delta t}  \mathbb{E} \left( \|Z_0\|^2 \right) \quad \text{for all } t \geq 0.
    \]
\end{definition}

In Chapter \ref{sec:deterministic_local_lyapunov_stability}, we employed Lyapunov functions to motivate a change of coordinates, specifically the orthogonal projection onto $\mathcal{G}^{\star,d}$. This allowed us to exploit a well-chosen linearization of the transformed system to establish exponential stability without further relying on Lyapunov functions. In the stochastic setting, we proceed analogously by once again applying the same change of coordinates. This enables us to derive both almost sure exponential stability and mean square exponential stability. To this end, we reformulate our system in the same spirit as in the deterministic case.



\subsection{System Description}
\label{sec:brownian_system_description}

The dynamics of the system are governed by the stochastic differential equation
\begin{equation}\label{stochastic_basic_system_description}
    d Z_{t}^{n} = -\lambda\left(Z_{t}^{n}-\nu_{f}^{\alpha}(Z_{t})\right) d t + \sigma \left(Z_{t}^{n}-\nu_{f}^{\alpha}(Z_{t})\right) \circ d W_t^{n}
\end{equation}
for $n=1, \ldots, N$, where $W_t^{1},\ldots W_t^{N}$ are independent $d$-dimensional Brownian motions, $Z_t = (Z_t^{1},\ldots,Z_t^{N})^{\top}$ represents the particle states and \( \circ \) denotes the element-wise (Hadamard) product. 

As in the deterministic case, we can reformulate $Z_{t}^{n}-\nu_{f}^{\alpha}(Z_{t})$ by expressing it in terms of pair-wise differences. This transformation can be applied to both the drift and the diffusion terms, leading to the equivalent representation
\begin{equation}\label{eq:stochastic_sum_system_description}
    d Z_{t}^{n} = -\lambda\left( \sum_{\substack{m=1\\m\neq n}}^{N} a_m \cdot (Z_{t}^{n} - Z_{t}^{m} ) \right) d t + \sigma \left( \sum_{\substack{m=1\\m\neq n}}^{N} a_m \cdot (Z_{t}^{n} - Z_{t}^{m}) \right) \circ d W_t^{n}
\end{equation}
for $n=1, \ldots, N$ with $a_m=a_m(Z_t)$ as defined in section \ref{sec:deterministic_system_description}. Additionally, we define $Z_t^{:,d} := (z_t^{1,d},\ldots,z_t^{N,d})^{\top}$  to organize the system component-wise across dimensions. Sorting the system by dimension, we obtain the following structured representation
\begin{equation}\label{eq:stochastic_complete_system_description}
    dZ_t = d \begin{pmatrix}
        z_{t}^{1,1} \\ \vdots \\ z_{t}^{N,1} \\ \vdots \\ z_{t}^{1,D} \\ \vdots \\z_{t}^{N,D} 
    \end{pmatrix} 
    = - \lambda
    \begin{pmatrix}
        \sum\limits_{m=2}^{N} a_m \left(z_{t}^{1,1} - z_{t}^{m,1} \right) \\ 
        \vdots \\ 
        \sum\limits_{m=1}^{N-1} a_m \left(z_{t}^{N,1} - z_{t}^{m,1} \right) \\ 
        \vdots \\ 
        \sum\limits_{m=2}^{N} a_m \left(z_{t}^{1,D} - z_{t}^{m,D} \right) \\ 
        \vdots \\ 
        \sum\limits_{m=1}^{N-1} a_m \left(z_{t}^{N,D} - z_{t}^{m,D} \right) 
    \end{pmatrix} dt + \sigma
    \begin{pmatrix}
        \sum\limits_{m=2}^{N} a_m \left(z_{t}^{1,1} - z_{t}^{m,1} \right) \\ 
        \vdots \\ 
        \sum\limits_{m=1}^{N-1} a_m \left(z_{t}^{N,1} - z_{t}^{m,1} \right) \\ 
        \vdots \\ 
        \sum\limits_{m=2}^{N} a_m \left(z_{t}^{1,D} - z_{t}^{m,D} \right) \\ 
        \vdots \\ 
        \sum\limits_{m=1}^{N-1} a_m \left(z_{t}^{N,D} - z_{t}^{m,D} \right) 
    \end{pmatrix} 
    \circ d \begin{pmatrix}
        w_{t}^{1,1} \\ \vdots \\ w_{t}^{N,1} \\ \vdots \\ w_{t}^{1,D} \\ \vdots \\w_{t}^{N,D} 
    \end{pmatrix}
\end{equation}
where $W_t^{n}= (w_t^{n,1},\ldots,w_t^{n,D})^{\top}$ captures the Brownian motion associated with each component.

Since all components of each Brownian motion are independent, the evolution of each dimension is independent of the others, given the weights \(a_1, \ldots, a_N\), in the same way as in the deterministic case. Consequently, it suffices once more to analyze stability separately for each dimension \(d \in \{1, \ldots, D\}\).

Using the notation from Section \ref{sec:deterministic_system}, the system dynamics for a single dimension $d$ can be expressed compactly as
\begin{equation}\label{eq:stochastic_system_description_one_dimension}
    dZ_t^{:,d} = - \lambda \hat{L} Z_t^{:,d} dt + \sigma \hat{L} Z_t^{:,d} \circ dW_t^{:,d},
\end{equation}
where $W_t^{:,d} = (w_{t}^{1,d},\ldots, w_{t}^{N,d})^{\top} \in \mathbb{R}^{N}$ represents the Brownian motion in dimension $d$. Similarly, the full system in \eqref{eq:stochastic_complete_system_description} can be rewritten in matrix form as
\begin{equation}\label{eq:stochstic_final_system_description}
    dZ_t = - \lambda L Z_t dt + \sigma L Z_t \circ dW_t
\end{equation}
with $W_t = \left(W_t^{1},\ldots, W_t^{N} \right)^{\top} \in \mathbb{R}^{DN}$. 

In the following, we extend the stability analysis to stochastic CBO dynamics. In particular, we will demonstrate that the system \eqref{eq:stochstic_final_system_description} is almost sure and mean square globally exponentially stable on the set
\begin{equation} \label{eq:stochastic_fixed_point_system} \mathcal{G}^{\star}= \left \{ z \in \mathbb{R}^{N\times D}: \, z^{1,d}= \ldots =z^{N,d}, d=1, \ldots, D\right \}, \end{equation}
which characterizes the set of equilibrium points of the system \eqref{eq:stochstic_final_system_description}. This follows directly from the same calculations as in \eqref{eq:determnistic_fixed_point_necessary_conditions} and \eqref{eq:determnistic_fixed_point_sufficient_condition}, since both the drift term $a(X_t)$ and the diffusion term $b(X_t)$ share the same functional form, differing only by a scaling factor.

\subsection{Global Exponential Stability}
\label{sec:stochastic_global_lyapunov_stability}

Since the equilibrium manifold $\mathcal{G}^\star$ remains unchanged in the stochastic setting, we employ the same change of coordinates as in the deterministic case. Furthermore, due to the independence of the Brownian motions across particles and dimensions, it suffices to analyze each dimension $d \in \{1, \ldots, D\}$ separately.

We consider the projected dynamics orthogonal to the consensus manifold by
\[
E_t^{:,d} = Z_t^{:,d} - \bar{Z}_t^{:,d} \mathbf{1}_N,
\quad \text{where} \quad
\bar{Z}_t^{:,d} = \frac{1}{N} \mathbf{1}_N^T Z_t^{:,d},
\]
so that $E_t^{:,d}$ lies in the orthogonal complement of
\[
\mathcal{G}^{\star,d} := \{ z \in \mathbb{R}^N : z = \gamma \mathbf{1}_N, \, \gamma \in \mathbb{R} \}.
\]
Analogously to the deterministic case, this leads to the convenient representation
\begin{align*}
  E_t^{:,d} = Z_t^{:,d} - \bar{Z}_t^{:,d} \mathbf{1}_N =  \left( I_N - \frac{1}{N}  \mathbf{1}_N\mathbf{1}_N^T \right)  Z_t^{:,d} = P Z_t^{:,d},
\end{align*}
where \( P \) is the projection onto the orthogonal complement of the consensus space.

In dimension \( d \), the original system evolves according to
\[
dZ_t^{:,d} = -\lambda \hat{L} Z_t^{:,d} \, dt + \sigma \hat{L} Z_t^{:,d} \circ dW_t^{:,d}.
\]
Since \( P \) is a constant matrix (independent of the particle positions), it can be pulled inside the It\^o-differential. Applying this projection yields
\[
dE_t^{:,d} = d(P Z_t^{:,d}) = -\lambda P \hat{L} Z_t^{:,d} \, dt + \sigma P \hat{L} Z_t^{:,d} \, \circ dW_t^{:,d}.
\]
Using the identity \( P \hat{L} = P \), see \eqref{core-deterministic-2}, the system simplifies to
\begin{equation} \label{eq:stoch_orth_system}
dE_t^{:,d} = -\lambda P Z_t^{:,d} \, dt + \sigma P Z_t^{:,d} \circ dW_t^{:,d} = -\lambda E_t^{:,d} \, dt + \sigma E_t^{:,d} \circ dW_t^{:,d},
\end{equation}
which takes the form of a linear stochastic differential equation.

This linear system admits the unique explicit solution
\[
E_t^{:,d} = \exp\left(-\left(\lambda + \frac{\sigma^2}{2}\right) t I + \sigma W_t^{:,d}\right) E_0^{:,d}, \quad t \geq 0,
\]
and, componentwise for each particle \( n \in \{1,\ldots, N\} \),
\begin{equation} \label{eq:one_dim_system_solution_stoch}
E_t^{n,d} = \exp\left(-\left(\lambda + \frac{\sigma^2}{2}\right)t + \sigma W_t^n \right) E_0^{n,d}, \quad t \geq 0.
\end{equation}

Applying the logarithm to the absolute value of \eqref{eq:one_dim_system_solution_stoch} and dividing by \( t > 0 \) yields
\begin{equation} \label{eq:one_dim_system_solution_stoch_log}
\frac{1}{t} \ln\left( |E_t^{n,d}| \right)
= -\left(\lambda + \frac{\sigma^2}{2}\right)
+ \sigma \frac{W_t^n}{t}
+ \frac{1}{t} \ln\left( |E_0^{n,d}| \right).
\end{equation}
By the law of the iterated logarithm \citenote{\text{Theorem 4.2}}{sde_lyapunov_stability_mao}, we have
\[
\limsup_{t \to \infty} \frac{W_t^n}{\sqrt{2t \log(\log t)}} = 1
\quad \text{almost surely}.
\]
In particular, this implies
\[
\limsup_{t \to \infty} \frac{W_t^n}{t} = 0
\quad \text{almost surely}.
\]
Moreover, since the last term in \eqref{eq:one_dim_system_solution_stoch_log} satisfies
\[
\limsup_{t \to \infty} \frac{1}{t} \ln\left( |E_0^{n,d}| \right) = 0,
\]
we may take the \(\limsup\) of \eqref{eq:one_dim_system_solution_stoch_log} to conclude
\begin{equation} \label{eq:limsup_componentwise_decay}
\limsup_{t \to \infty} \frac{1}{t} \ln\left( |E_t^{n,d}| \right)
= -\left(\lambda + \frac{\sigma^2}{2}\right)
\quad \text{almost surely}.
\end{equation}

Due to the equivalence of norms in finite-dimensional spaces, there exist constants \( c_1, c_2 > 0 \) such that
\[
c_1 \cdot \max_{n=1,\ldots,N} |E_t^{n,d}| \leq \|E_t^{:,d}\| \leq c_2 \cdot \max_{n=1,\ldots,N} |E_t^{n,d}|.
\]
Taking logarithms and dividing by \( t \) gives
\[
\frac{1}{t} \ln(c_1)
+ \frac{1}{t} \ln \left( \max_{n=1,\ldots,N} |E_t^{n,d}| \right)
\leq \frac{1}{t} \ln \left( \|E_t^{:,d}\| \right)
\leq \frac{1}{t} \ln(c_2)
+ \frac{1}{t} \ln \left( \max_{n=1,\ldots,N} |E_t^{n,d}| \right).
\]
Passing to the limit superior as \( t \to \infty \), and using \eqref{eq:limsup_componentwise_decay}, we obtain
\begin{equation}\label{eq:limsup_total_decay}
\limsup_{t \to \infty} \frac{1}{t} \ln \left( \|E_t^{:,d}\| \right)
= \limsup_{t \to \infty} \frac{1}{t} \ln \left( \max_{n=1,\ldots,N} |E_t^{n,d}| \right)
= -\left(\lambda + \frac{\sigma^2}{2}\right)
\quad \text{almost surely}.
\end{equation}

This proves that the system \eqref{eq:stoch_orth_system} is almost surely globally exponentially stable on $\mathcal{G}^{\star,d}$ with exponential rate
\[
\delta = \lambda + \frac{\sigma^2}{2}.
\]
Since this analysis holds for each dimension \( d \) independently, the result carries over to the full system, completing the proof of almost sure global exponential stability for the original dynamics.

In particular, this shows that almost sure convergence holds for all positive values of \(\lambda\) and \(\sigma\). Moreover, even if the drift parameter \(\lambda\) is negative, and the deterministic ODE would therefore be unstable, the presence of sufficiently strong noise can compensate for this and still yield almost sure stability. This phenomenon is known as \emph{stabilization by noise}, see, e.g.,  the pioneering result on page 211f in \cite{stabilization_by_noise_khasminskii2011} as well as \cite{stabilization_by_noise_Arnold, stabilization_by_noise_Mao}.

\begin{theorem}[Almost Sure Global Exponential Stability of the Stochastic System] \label{thm:stochastic_system_as_exponential_stability}
    The stochastic system $$dZ_t = - \lambda L Z_t dt + \sigma L Z_t \circ dW_t$$ with $Z_t = (z_{t}^{1,1},\ldots, z_{t}^{1,D},\ldots,z_{t}^{N,1},\ldots,z_{t}^{N,D})^{\top} \in \mathbb{R}^{DN}$ is \textbf{almost surely globally exponentially stable} with exact rate $\lambda + \frac{\sigma^2}{2}$ on the set of equilibrium points 
   \begin{equation*}\mathcal{G}^{\star}= \left \{ z \in \mathbb{R}^{N\times D}: \, z^{1,d}= \ldots = z^{N,d}, \, d=1, \ldots, D\right \}. \end{equation*}

\end{theorem}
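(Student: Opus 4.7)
The plan is to mirror the coordinate change introduced in the deterministic analysis, thereby reducing the non-linear SDE to a linear SDE on the orthogonal complement of the consensus manifold, and then to apply the law of the iterated logarithm to the resulting explicit solution. Since the Brownian motions are independent across particles and spatial dimensions, and since both the drift and the Hadamard diffusion in \eqref{eq:stochastic_complete_system_description} decouple dimension-wise, it suffices to work in a fixed dimension $d \in \{1,\ldots,D\}$ with the projected process $E_t^{:,d} = P Z_t^{:,d}$, where $P = I_N - \frac{1}{N}\mathbf{1}_N\mathbf{1}_N^{\top}$ is the orthogonal projection onto the complement of $\mathcal{G}^{\star,d}$.

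The first step is to derive the SDE satisfied by $E_t^{:,d}$. Because $P$ is deterministic, it commutes with the It\^o differential, and the algebraic identity $P\hat{L} = P$ established in \eqref{core-deterministic-2}, which hinges on $P\mathbf{1}_N=\mathbf{0}_N$, collapses the non-linear system into the linear, weight-independent SDE $dE_t^{:,d} = -\lambda E_t^{:,d}\, dt + \sigma E_t^{:,d} \circ dW_t^{:,d}$. The Hadamard structure of the diffusion decouples the components, so each coordinate $E_t^{n,d}$ satisfies a scalar geometric Brownian motion with explicit solution $E_t^{n,d} = \exp\!\bigl(-(\lambda+\sigma^2/2)\,t + \sigma W_t^n\bigr)\,E_0^{n,d}$.

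From this closed form the exponential rate follows by taking logarithms, dividing by $t$, and invoking the law of the iterated logarithm to conclude that $W_t^n/t \to 0$ almost surely, which yields $\limsup_{t\to\infty} t^{-1}\ln|E_t^{n,d}| = -(\lambda+\sigma^2/2)$ almost surely for each coordinate $(n,d)$. Equivalence of norms on $\mathbb{R}^N$ (and on $\mathbb{R}^{DN}$) then transfers the same rate to $\|E_t\|$, and since $Z_t - PZ_t$ lies in $\mathcal{G}^{\star}$ by construction, exponential decay of $\|E_t\|$ is exactly almost-sure global exponential stability of $Z_t$ on $\mathcal{G}^{\star}$.

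The main obstacle is conceptual rather than computational: the decisive observation is that the projection $P$ annihilates the non-linear weight vector $\ell(Z_t)$ appearing in $\hat{L} = I_N - \mathbf{1}_N \ell(Z_t)^{\top}$, so that after projection all $\alpha$- and $f$-dependence drops out of the dynamics. Once $P\hat{L} = P$ is in hand the argument reduces to a scalar geometric Brownian motion computation, and the only remaining subtleties are verifying well-posedness of the projected linear SDE (immediate from its linearity after the change of variables) and checking that the norm-equivalence constants contribute only $O(1/t)$ terms that vanish in the limsup.
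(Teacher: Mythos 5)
Your proposal is correct and follows essentially the same route as the paper: dimension-wise decoupling, projection by $P = I_N - \frac{1}{N}\mathbf{1}_N\mathbf{1}_N^{\top}$ combined with the identity $P\hat{L} = P$ to obtain the linear SDE, the explicit geometric-Brownian-motion solution, the law of the iterated logarithm to kill the $\sigma W_t^n/t$ term, and norm equivalence to pass from components to the full vector. No gaps to report.
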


\vspace{1.5em}

To establish not only almost sure exponential convergence but also convergence in mean square, we now consider the squared explicit solution \eqref{eq:one_dim_system_solution_stoch} of the linear SDE in expectation:
\begin{equation*}
    \mathbb{E}\left( |E_t^{n,d}|^2 \right)
    = \mathbb{E}\left( \exp\left( -2\left(\lambda + \frac{\sigma^2}{2} \right)t + 2\sigma W_t^n \right) |E_0^{n,d}|^2 \right).
\end{equation*}
Since \( W_t^n \) is independent of the initial condition \( E_0^{n,d} \), the expectation factorizes
\begin{equation*}
    \mathbb{E}\left( |E_t^{n,d}|^2 \right)
    = \exp\left( -2\left(\lambda + \frac{\sigma^2}{2} \right)t \right)
      \mathbb{E}\left( \exp(2\sigma W_t^n) \right)
      \mathbb{E}\left( |E_0^{n,d}|^2 \right).
\end{equation*}
Recalling that \( W_t^n \sim \mathcal{N}(0, t) \), the random variable \( 2\sigma W_t^n \) is Gaussian with mean \( 0 \) and variance \( 4\sigma^2 t \). For a Gaussian random variable \( X \sim \mathcal{N}(\mu, \sigma^2) \), it is well known that
\[
    \mathbb{E}\left(e^{aX}\right) = \exp\left(a\mu + \tfrac{1}{2} a^2 \sigma^2 \right).
\]
Applying this identity with \( X = W_t^n \), \( a = 2\sigma \), \( \mu = 0 \), and \( \text{Var}(W_t^n) = t \), we obtain
\[
    \mathbb{E}\left( \exp(2\sigma W_t^n) \right)
    = \exp\left( \tfrac{1}{2} (2\sigma)^2 t \right)
    = \exp\left( 2\sigma^2 t \right).
\]
Hence, the mean squared value simplifies to
\begin{align*}
\mathbb{E}\left( |E_t^{n,d}|^2 \right) &= \exp\left(-2\left(\lambda + \frac{\sigma^2}{2}\right)t \right) \exp\left( 2\sigma^2 t \right) \mathbb{E}\left(|E_0^{n,d}|^2\right) \\& = \exp\left(-2\left(\lambda - \frac{\sigma^2}{2}\right)t \right) \mathbb{E}\left(|E_0^{n,d}|^2\right).
\end{align*}
Since this relation holds for each \( n = 1, \ldots, N \), we can apply the linearity of expectation to obtain a bound for the full vector
\begin{equation}
    \mathbb{E}\left( \|E_t^{:,d}\|^2 \right) = \exp\left( -2\left( \lambda - \frac{\sigma^2}{2} \right)t \right) \mathbb{E}\left( \|E_0^{:,d}\|^2 \right).
\end{equation}
Therefore, the system \eqref{eq:stoch_orth_system} is mean square globally exponentially stable on \( \mathcal{G}^{\star,d} \), with stability constant \( c = 1 \) and exponential decay rate
\begin{equation}
    \delta = 2\lambda - \sigma^2.
\end{equation}

Since this analysis holds for each dimension \( d \) independently, the result carries over to the full system, completing the proof of mean square global exponential stability for the original dynamics.

\begin{theorem}[Mean Square Global Exponential Stability of the Stochastic System] \label{thm:stochastic_system_exponential_stability}
    The stochastic system $$dZ_t = - \lambda L Z_t dt + \sigma L Z_t \circ dW_t$$ with $Z_t = (z_{t}^{1,1},\ldots, z_{t}^{1,D},\ldots,z_{t}^{N,1},\ldots,z_{t}^{N,D})^{\top} \in \mathbb{R}^{DN}$ is \textbf{mean square globally exponentially stable} with constant $c=1$ and exact rate $2\lambda - \sigma^2$ on the set of equilibrium points 
   \begin{equation*}\mathcal{G}^{\star}= \left \{ z \in \mathbb{R}^{N\times D}: \, z^{1,d}= \ldots = z^{N,d}, \, d=1, \ldots, D\right \}. \end{equation*}

\end{theorem}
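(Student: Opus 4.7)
The plan is to mirror the strategy that established the almost sure exponential stability in Theorem \ref{thm:stochastic_system_as_exponential_stability}: first perform the change of coordinates $E_t^{:,d} = P Z_t^{:,d}$, which reduces the full stochastic CBO dynamics on the orthogonal complement of $\mathcal{G}^{\star,d}$ to the linear SDE in \eqref{eq:stoch_orth_system}, and then exploit the explicit geometric Brownian motion representation \eqref{eq:one_dim_system_solution_stoch} for each component $E_t^{n,d}$.

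The key step is to take the second moment of the componentwise explicit solution
\[
E_t^{n,d} = \exp\!\left(-\left(\lambda + \tfrac{\sigma^2}{2}\right)t + \sigma W_t^n\right) E_0^{n,d}.
\]
Squaring and taking expectation, and using the standard assumption that the initial data is independent of the driving Brownian motion, the expectation factorizes into a deterministic exponential prefactor and the moment generating function $\mathbb{E}[\exp(2\sigma W_t^n)]$. Since $W_t^n \sim \mathcal{N}(0,t)$, the Gaussian MGF evaluates to $\exp(2\sigma^2 t)$. Combining exponents yields $\mathbb{E}[|E_t^{n,d}|^2] = \exp(-(2\lambda - \sigma^2)t)\,\mathbb{E}[|E_0^{n,d}|^2]$, so each component decays in mean square at rate $2\lambda - \sigma^2$.

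Summing over $n=1,\ldots,N$ by linearity of expectation promotes the componentwise identity to the vector identity $\mathbb{E}[\|E_t^{:,d}\|^2] = \exp(-(2\lambda-\sigma^2)t)\mathbb{E}[\|E_0^{:,d}\|^2]$, which is precisely mean square global exponential stability on $\mathcal{G}^{\star,d}$ with constant $c=1$ and rate $\delta = 2\lambda - \sigma^2$. Because the $D$ spatial dimensions decouple and $\|E_t\|^2 = \sum_{d=1}^{D}\|E_t^{:,d}\|^2$, a final summation over $d$ extends the estimate to the full state and completes the argument for the original dynamics.

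The calculation is essentially mechanical once the explicit solution is in hand, so the main pitfall is bookkeeping rather than analysis. The Itô correction $-\sigma^2/2$ built into the exponent of the geometric Brownian motion doubles upon squaring, while the Gaussian MGF contributes $+2\sigma^2$, and keeping the signs straight is essential to arrive at the net rate $2\lambda - \sigma^2$ rather than $2\lambda + \sigma^2$. A subtlety worth highlighting in the statement is that, unlike the almost sure rate $\lambda + \sigma^2/2$ from Theorem \ref{thm:stochastic_system_as_exponential_stability}, the mean square rate is positive only under the condition $2\lambda > \sigma^2$; hence the stabilization-by-noise phenomenon observed in the almost sure setting does not carry over to mean square.
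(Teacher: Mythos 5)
Your proposal is correct and follows essentially the same route as the paper: projecting onto the orthogonal complement of $\mathcal{G}^{\star,d}$, squaring the explicit componentwise solution \eqref{eq:one_dim_system_solution_stoch}, factorizing the expectation via independence of $W_t^n$ and $E_0^{n,d}$, evaluating the Gaussian moment generating function to get $\exp(2\sigma^2 t)$, and summing over components and dimensions to obtain the exact rate $2\lambda - \sigma^2$ with $c=1$. Your closing remark on the sign bookkeeping and the necessity of $2\lambda > \sigma^2$ matches the paper's own discussion following the theorem.
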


In order for the system to be mean square exponentially stable, it is necessary that the decay rate \( \delta = 2\lambda - \sigma^2 \) is positive. This requires the condition
\begin{equation}\label{eq:stability_condition}
    2\lambda > \sigma^2.
\end{equation}
We emphasize that, independently of this condition, almost sure exponential convergence always holds for all values of \( \lambda > 0 \) and \( \sigma \geq 0 \), as shown earlier. 

We note that the stability condition \eqref{eq:stability_condition} and the exponential rate \(2\lambda - \sigma^2\) obtained in Theorem \ref{thm:stochastic_system_exponential_stability} coincide with the findings of \cite{anisotropic_convergence_CBO}, where the anisotropic CBO dynamics were analyzed in the mean-field limit. In contrast, our approach neither requires structural assumptions on the objective function \(f\) nor asymptotic regimes such as \(N \to \infty\) or \(\alpha \to \infty\). Moreover, it yields the exact mean square decay rate, rather than an upper bound that can get arbitrarily close to \(2\lambda - \sigma^2\). At the same time, our analysis is restricted to the dynamics orthogonal to the consensus manifold \(\mathcal{G}^\star\), establishing exponential convergence toward \(\mathcal{G}^\star\) but not to a specific minimizer. In contrast, \cite{anisotropic_convergence_CBO} succeeded in proving convergence to the global minimizer in the mean-field setting. A precise characterization of the limiting point within \(\mathcal{G}^\star\) in our framework remains an open problem for future research.




\subsection{Exponential Stability under Discretization}

In most works on CBO, such as \cite{CBO_origin, fornasier_CBO_converge_globally, convergence_first_order_CBO, trends_in_CBO}, 
the Euler-Maruyama scheme \cite{Euler_Maruyama} is employed to simulate the dynamics of the system 
\begin{equation*}
    d Z_{t}^{n} = -\lambda\left(Z_{t}^{n}-\nu_{f}^{\alpha}(Z_{t})\right) d t + \sigma \left(Z_{t}^{n}-\nu_{f}^{\alpha}(Z_{t})\right) \circ d W_t^{n}, \qquad n=1,\ldots,N.
\end{equation*}
The Euler-Maruyama scheme then provides the discretized approximation 
\(\widehat{Z}_{k\Delta}^n\) with time step size \(\Delta > 0\), given by
\begin{equation}\label{Stochastic_basic_system_description_Euler_Maruyama}
     \widehat{Z}_{(k+1)\Delta}^{n} = \widehat{Z}_{k\Delta}^{n} -\lambda\left( \widehat{Z}_{k\Delta}^{n}-\nu_{f}^{\alpha}( \widehat{Z}_{k\Delta})\right) \Delta 
     + \sigma \left( \widehat{Z}_{k\Delta}^{n}-\nu_{f}^{\alpha}( \widehat{Z}_{k\Delta}) \right) \circ \Delta W_k^n,
\end{equation}
where \( \circ \) denotes the element-wise (Hadamard) product. Here we use the abbreviation $\Delta W_k^n = W_{(k+1)\Delta}^n - W_{k\Delta}^n$ for the increments of the Brownian motion $W^n$. Note that $\Delta W_k^n  \sim \mathcal{N}(\mathbf{0}_D, \Delta \cdot I_D)$ and moreover the random vectors $\Delta W_0^n, \ldots, \Delta W_k^n, \ldots $ are independent. Thus, the Euler–Maruyama scheme combines a forward Euler step for the drift with a pointwise scaled Gaussian perturbation to model the stochastic diffusion component.

As in Subsection \ref{sec:brownian_system_description}, it suffices to study the dynamics of each coordinate \( d \in \{1,\ldots, D\} \) separately. For $Z_t^{:,d} := (Z_t^{1,d}, \ldots, Z_t^{N,d})^\top \in \mathbb{R}^N$ the stochastic system
\[
dZ_t^{:,d} = - \lambda \hat{L}(Z_t) Z_t^{:,d} \, dt + \sigma \hat{L}(Z_t) Z_t^{:,d} \circ dW_t^{:,d}
\]
can be approximated via the Euler–Maruyama scheme as
\begin{equation}\label{eq:euler_maruyama_L_hat_per_dimension}
\widehat{Z}_{(k+1)\Delta}^{:,d} = \widehat{Z}_{k\Delta}^{:,d} - \lambda \hat{L}(\widehat{Z}_{k\Delta}) \widehat{Z}_{k\Delta}^{:,d} \Delta + \sigma \hat{L}(\widehat{Z}_{k\Delta}) \widehat{Z}_{k\Delta}^{:,d} \circ \Delta W_k^{:,d}, \qquad k = 0,1,\ldots,
\end{equation}
where \( \Delta W_k^{:,d} \in \mathbb{R}^N \) are the increments of the Brownian motions which are acting on the dimension $d$ of the particles. Here, the matrix \( \hat{L}(\widehat{Z}_{k\Delta}) \in \mathbb{R}^{N \times N} \) is given as in \eqref{eq:determinstic_L_hat-Euler}.

Analogously to the deterministic case, we now analyze the dynamics of the Euler–Maruyama scheme on the orthogonal complement of $\mathcal{G}^{\star,d}$, that is
\begin{align*}
 \widehat{E}_{k \Delta}^{:,d} & =   \left( I_N - \frac{1}{N}  \mathbf{1}_N \mathbf{1}_N^\top \right)  \widehat{Z}_{k\Delta}^{:,d} = P \widehat{Z}_{k\Delta}^{:,d}.
\end{align*}
We obtain the dynamics
\begin{align*}
 \widehat{E}_{(k+1) \Delta}^{:,d} & = P \widehat{Z}_{(k+1)\Delta}^{:,d} \\
 & = P \left( \widehat{Z}_{k\Delta}^{:,d} - \lambda \hat{L}(\widehat{Z}_{k\Delta}) \widehat{Z}_{k\Delta}^{:,d} \Delta + \sigma \hat{L}(\widehat{Z}_{k\Delta}) \widehat{Z}_{k\Delta}^{:,d} \circ \Delta W_k^{:,d} \right) \\
 & = \widehat{E}_{k\Delta}^{:,d} - \lambda P \hat{L}(\widehat{Z}_{k\Delta}) \widehat{Z}_{k\Delta}^{:,d} \Delta + \sigma P \hat{L}(\widehat{Z}_{k\Delta}) \widehat{Z}_{k\Delta}^{:,d} \circ \Delta W_k^{:,d}\\
 & = \widehat{E}_{k\Delta}^{:,d} - \lambda P \widehat{Z}_{k\Delta}^{:,d} \Delta + \sigma P \widehat{Z}_{k\Delta}^{:,d} \circ \Delta W_k^{:,d},
\end{align*}
where we have again applied the relation $P \hat{L}(\widehat{Z}_{k\Delta})=P$.

\vspace{1em}

We now analyze the mean square stability of the Euler-Maruyama discretization of the orthogonal dynamics. From the recursion
\begin{equation}\label{eq:recursion_stochastic_case}
\widehat{E}_{(k+1)\Delta}^{:,d} = \left(\mathbf{1}_N - \lambda \Delta \mathbf{1}_N + \sigma \Delta W_k^{:,d}\right) \circ \widehat{E}_{k\Delta}^{:,d},
\end{equation}
we compute the second moment. Since the Brownian increments \(\Delta W_k^{:,d} \sim \mathcal{N}(\mathbf{0}_N, \Delta \cdot I_N)\) are independent of \(\widehat{E}_{k\Delta}^{:,d}\), we obtain
\begin{align*}
\mathbb{E}\left( \| \widehat{E}_{(k+1)\Delta}^{:,d} \|^2 \right)
&= \mathbb{E}\left( \left\| \left(\mathbf{1}_N - \lambda \Delta \mathbf{1}_N + \sigma \Delta W_k^{:,d}\right) \circ \widehat{E}_{k\Delta}^{:,d} \right\|^2 \right) \\
&= \sum_{n=1}^N \mathbb{E}\left( (1 - \lambda \Delta + \sigma \Delta W_{k}^{n,d} )^2 \right)  \mathbb{E}\left( ( \widehat{E}_{k\Delta}^{n,d} )^2 \right) \\
&= \left( (1 - \lambda \Delta)^2 + \sigma^2 \Delta \right)  \mathbb{E}\left( \| \widehat{E}_{k\Delta}^{:,d} \|^2 \right),
\end{align*}
where we used that \(\mathbb{E}(\Delta W_{k}^{n.d}) = 0\) and \(\mathbb{E}((\Delta W_{k}^{n.d})^2) = \Delta\).

By iteration, we get
\begin{equation}\label{eq:mean_square_iteration}
\mathbb{E}\left( \| \widehat{E}_{k\Delta}^{:,d} \|^2 \right) 
= \left( (1 - \lambda \Delta)^2 + \sigma^2 \Delta \right)^k \mathbb{E}\left( \| E_0^{:,d} \|^2 \right).
\end{equation}
Since for \( x \in [0,1) \) it holds that \( \ln(1-x) \leq -x \), we estimate
\[
\ln \left( (1 - \lambda \Delta)^2 + \sigma^2 \Delta \right) 
= \ln \left( 1 - ( 2 \lambda \Delta - \lambda^2 \Delta^2 - \sigma^2 \Delta) \right)
\leq - (2 \lambda \Delta - \lambda^2 \Delta^2 - \sigma^2 \Delta).
\]
Under the condition \( 2\lambda - \sigma^2 - \lambda^2 \Delta \in [0,1) \), this implies the exponential decay bound
\[
\left( (1 - \lambda \Delta)^2 + \sigma^2 \Delta \right)^k \leq \exp\left( - k \Delta \left(2 \lambda - \lambda^2 \Delta - \sigma^2 \right) \right).
\]

Hence, the Euler–Maruyama approximation \(\widehat{E}_{k\Delta}^{:,d}\) preserves mean square global exponential stablity of the equilibrium manifold with rate
\begin{equation}
\delta_{\text{EM}} := 2\lambda - \sigma^2 - \lambda^2 \Delta > 0,
\end{equation}
provided the step size \(\Delta\) satisfies
\begin{equation}
    0 < \Delta < \frac{2\lambda - \sigma^2}{\lambda^2}.
\end{equation}
More precisely, under this step size condition the Euler–Maruyama scheme satisfies
\begin{equation}
    \mathbb{E}\!\left( \| \widehat{E}_{k\Delta}^{:,d} \|^2 \right) 
    \leq \exp\!\left( - k \Delta \left( 2 \lambda - \lambda^2 \Delta - \sigma^2 \right) \right) 
    \mathbb{E}\!\left( \| E_0^{:,d} \|^2 \right),
\end{equation}
which shows that the mean square distance to the equilibrium manifold decays exponentially at rate $\delta_{\text{EM}}$.

\vspace{1em}

To establish almost sure exponential convergence of the Euler-Maruyama approximation \(\widehat{E}_{k\Delta}^{:,d}\), we first consider the evolution of its components. For each \(n\), the recursion \eqref{eq:recursion_stochastic_case} reads
\begin{equation} \label{eq:componentwise_recursion}
\widehat{E}_{(k+1)\Delta}^{n,d} = \left(1 - \lambda \Delta + \sigma \Delta W_k^{n,d} \right) \widehat{E}_{k\Delta}^{n,d},
\end{equation}
which can be iteratively expanded to yield the product representation
\[
\widehat{E}_{k\Delta}^{n,d} = \widehat{E}_{0}^{n,d} \prod_{j=0}^{k-1} \left(1 - \lambda \Delta + \sigma \Delta W_j^{n,d} \right).
\]
Taking logarithms and dividing by \(k\Delta\), we obtain
\[
\frac{1}{k \Delta} \ln \left( | \widehat{E}_{k\Delta}^{n,d} | \right) = \frac{1}{k \Delta} \ln \left( | \widehat{E}_{0}^{n,d} | \right) + \frac{1}{k \Delta} \sum_{j=0}^{k-1} \ln \left( | 1 - \lambda \Delta + \sigma \Delta W_j^{n,d} | \right).
\] 
Since the increments \(\Delta W_j^{n,d} \) are i.i.d $\mathcal{N}(0, \Delta)$, the strong law of large numbers implies
\[
\lim_{k \to \infty} \frac{1}{k \Delta} \ln \left( | \widehat{E}_{k\Delta}^{n,d} | \right) = \frac{1}{\Delta} \cdot \mathbb{E} \left( \ln \left( \left| 1 - \lambda \Delta + \sigma \sqrt{\Delta} Z \right| \right) \right) \quad \text{almost surely},
\]
where \(Z \sim \mathcal{N}(0, 1)\). 

As this asymptotic rate is identical for all components and independent of \(n\), we may pass from the componentwise behavior to the full vector norm. Using norm equivalence in finite dimensions and an argument analogous to the transition from \eqref{eq:limsup_componentwise_decay} to \eqref{eq:limsup_total_decay}, we conclude
\begin{equation}
\lim_{k \to \infty} \frac{1}{k \Delta} \ln \left( \| \widehat{E}_{k\Delta}^{:,d} \| \right) = \frac{1}{\Delta} \cdot \mathbb{E} \left( \ln \left( \left| 1 - \lambda \Delta + \sigma \sqrt{\Delta} Z\right| \right) \right) \quad \text{almost surely}.
\end{equation}

Consequently, the Euler-Maruyama approximation \(\widehat{E}_{k\Delta}^{:,d}\) converges almost surely exponentially to the equilibrium manifold if and only if the expectation on the right-hand side is negative. In that case, the almost sure exponential decay rate is given by
\begin{equation}
\delta_{\mathrm{EM}} = - \frac{1}{\Delta} \cdot \mathbb{E} \left( \ln \left( \left| 1 - \lambda \Delta + \sigma \sqrt{\Delta} Z \right| \right) \right).
\end{equation}

In the proof of Theorem 4.3, \cite{almost_sure_exponention_stability_linear_SDE} establishes the bound
\begin{equation}
\mathbb{E} \left[ \ln \left( | 1 - \lambda \Delta + \sigma \sqrt{\Delta} Z | \right) \right] \leq -\left( \lambda + \frac{\sigma^2}{2} \right) \Delta + C \Delta^2,
\end{equation}
where \(C = C(\lambda, \sigma) > 0\) is a constant depending on the drift and diffusion coefficients. Inserting this into the almost sure asymptotic expression derived above, we obtain the estimate
\begin{equation}
\lim_{k \to \infty} \frac{1}{k \Delta} \ln \left( \left\| \widehat{E}_{k\Delta}^{:,d} \right\| \right) \leq -\left( \lambda + \frac{\sigma^2}{2} - C \Delta \right) \quad \text{almost surely}.
\end{equation}
Thus, provided that
\begin{equation}
    0 < \Delta < \frac{2\lambda + \sigma^2}{2C},
\end{equation}
the Euler-Maruyama scheme exhibits almost sure exponential convergence to the equilibrium manifold with rate
\begin{equation}
\delta_{\mathrm{EM}} = \lambda + \frac{\sigma^2}{2} - C \Delta.
\end{equation}

\bigskip

\section{Numerical Results}
\label{sec:numerical_results}

In this chapter, we present numerical experiments to study the convergence behavior of both the \textbf{deterministic system}  
\begin{equation}\label{eq:evolution_determinstic_system}
    d X_{t}^{n} = -\lambda\left(X_{t}^{n}-\nu_{f}^{\alpha}(X_{t})\right) d t, \quad n=1,\ldots,N,
\end{equation}  
and the \textbf{stochastic system}  
\begin{equation}\label{eq:evolution_stochastic_system}
    d Z_{t}^{n} = -\lambda\left(Z_{t}^{n}-\nu_{f}^{\alpha}(Z_{t})\right) d t + \sigma \left(Z_{t}^{n}-\nu_{f}^{\alpha}(Z_{t})\right) \circ d W_t^{n}, \quad n=1,\ldots,N,
\end{equation}  
applied to the Rastrigin function \citenote{\text{Optimization Test Problems}}{test_functions}. This classical non-convex benchmark function is defined for \(x = (x_1,\ldots,x_D) \in \mathbb{R}^D\) as  
\begin{equation}\label{eq:rastrigin}
    f_{\text{Rastrigin}}(x) = 10D + \sum_{d=1}^D \left(x_d^2 - 10\cos(2\pi x_d)\right),
\end{equation}  
with global minimizer \(\mathbf{X}^\star = \mathbf{0}_D\). Its landscape contains many regularly spaced local minima, which makes the problem difficult for optimization methods since they can easily get trapped. In the context of CBO, the Rastrigin function is therefore well suited to test the \emph{exploration ability} of particle systems and has become a standard example in numerical studies \cite{CBO_origin, fornasier_CBO_converge_globally, anisotropic_convergence_CBO}.

Our implementation builds upon the Python-based CBO framework introduced in \cite{CBO_in_python}. The complete source code, including all function definitions, numerical solvers, and visualization scripts, is available in the Git repository accompanying this paper \cite{git_Lypaunov_stability}.

Unless stated otherwise, the following settings are used throughout all experiments:
\begin{itemize}
    \item \textbf{Dimension:} \( D = 2 \), allowing intuitive visualization of particle trajectories.
    \item \textbf{Number of particles:} \( N = 100 \), initialized from the uniform distribution \( \mathcal{U}([-5, 5]^D) \).
    \item \textbf{Discretization:} We employ the Euler–Maruyama method \cite{Euler_Maruyama} with a time step \( \Delta = 0.05 \).
    \item \textbf{Time horizon:} \( T = 4.95 \) corresponding to \( 100 \) time points (epochs).
    \item \textbf{Consensus parameters:} \( \lambda = 1 \), \( \alpha = 1000 \).
\end{itemize}
In the numerical experiments, we compare the observed convergence rates with the exponential stability results established in Theorems \ref{thm:stochastic_system_as_exponential_stability} and \ref{thm:stochastic_system_exponential_stability}, which concern the continuous system. We do not refer to the discretized rates, since the additional term $\lambda^2 \Delta$ in their decay estimate is negligible for the step size $\Delta=0.05$ considered here.

\subsection{Convergence Behavior of 2D-Rastrigin}

We start by comparing the deterministic system (\ref{eq:evolution_determinstic_system}) and the stochastic system (\ref{eq:evolution_stochastic_system}) for two diffusion values,  
\[
\sigma = 1 \quad \text{and} \quad \sigma = \sqrt{2.1}.
\]  
The first satisfies the mean square stability condition (\ref{eq:stability_condition}) from Theorem~\ref{thm:stochastic_system_exponential_stability},  
\[
2\lambda > \sigma^2,
\]  
while the second violates it. We note, that the almost sure convergence holds for any configuration of $\lambda$ and $\sigma$, due to Theorem \ref{thm:deterministic_system_exponential_stability}.

We visualize the trajectories of all \(N = 100\) particles in the different system variants on the 2D-Rastrigin function. Each row in Figure \ref{fig:rastrigin_screenshot_row_all} corresponds to one system. We show four snapshots per system: the initial state at time \(t = 0\), an intermediate state after 15 time steps (\(t = 0.75\)), another after 40 steps (\(t = 2\)), and the final state after 100 steps (\(t = 4.95\)). In each plot, the current consensus point is indicated.

The systems are ordered by increasing stochasticity, starting with the purely deterministic case. This visualization allows us to qualitatively assess the influence of noise on the convergence behavior. All views are rendered from above; the global minimum \(\mathbf{X}^\star = (0, 0)\) lies at the center of each snapshot.

\begin{figure}[h]
    \centering
    \includegraphics[width=\textwidth]{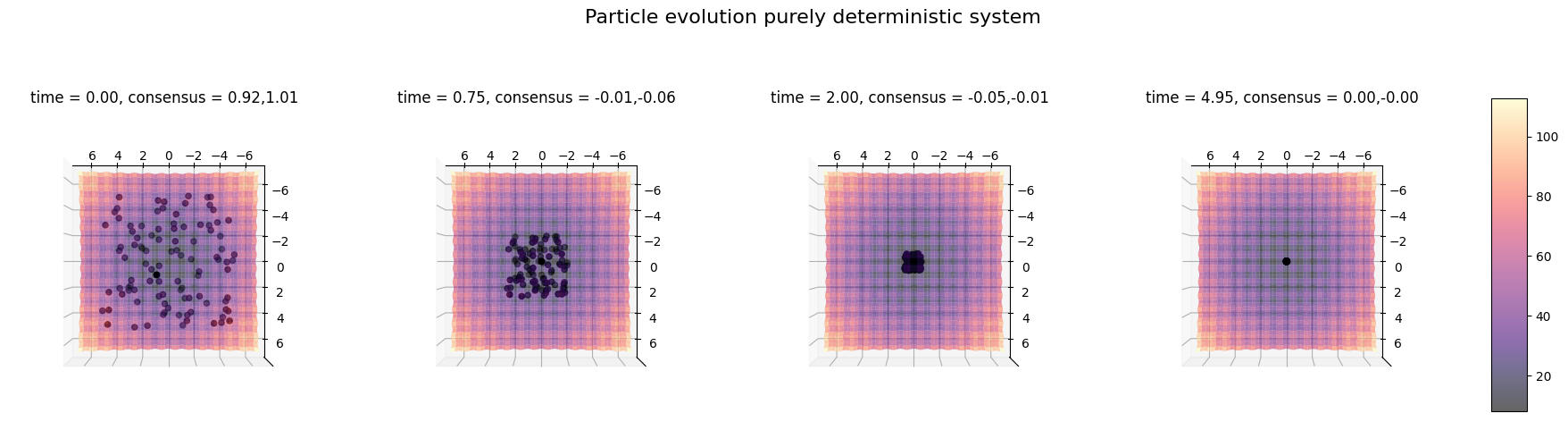}\vspace{-1ex}
    
    \includegraphics[width=\textwidth]{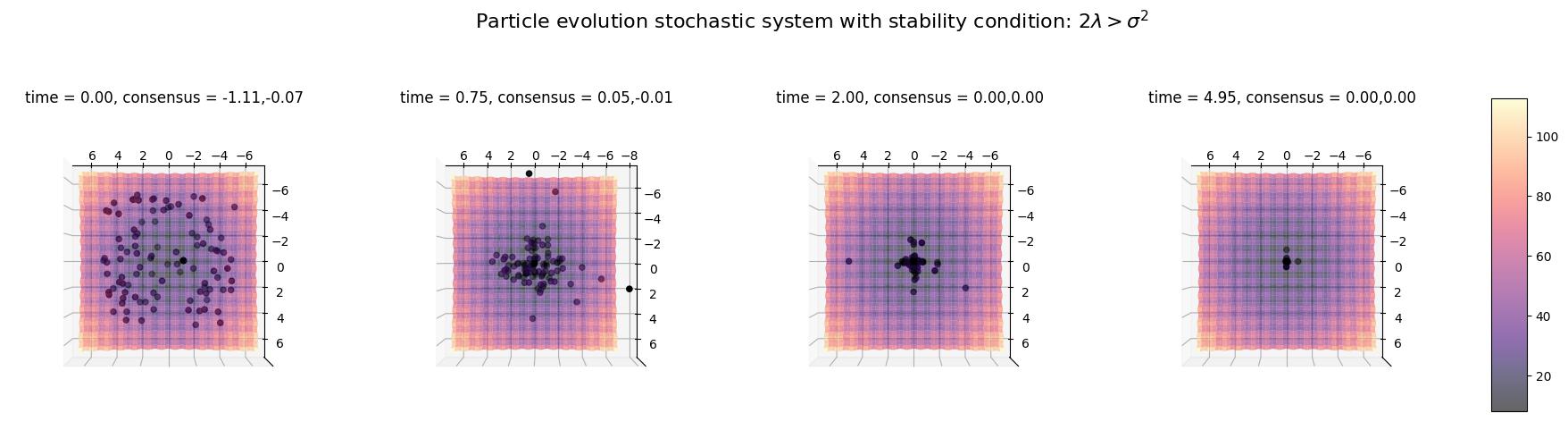}\vspace{-1ex}
    
    \includegraphics[width=\textwidth]{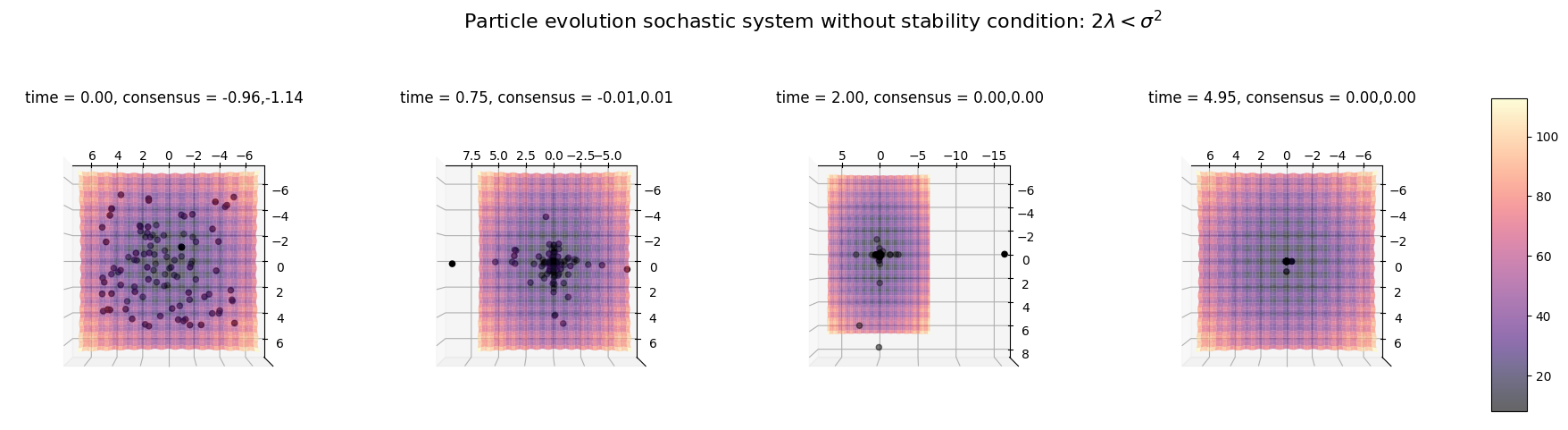}
    
    \caption{Evolution of $N = 100$ particles on the 2D-Rastrigin function for the purely deterministic system (\ref{eq:evolution_determinstic_system}),
    the stochastic system (\ref{eq:evolution_stochastic_system}) with $\sigma = 1$ (satisfying the stability condition $2 \lambda > \sigma^2$), and
    the stochastic system (\ref{eq:evolution_stochastic_system}) with $\sigma = \sqrt{2.1}$ (violating the stability condition $2 \lambda > \sigma^2$). 
    For each case, we display four snapshots over time: at initialization ($\text{time} = 0$), after 15 time steps ($\text{time} = 0.75$), after 40 time steps ($\text{time} = 2$), and at the end of the simulation ($\text{time} = 4.95$). All plots are rendered from a top-down perspective; the global minimum is located at $\mathbf{X}^{\star} = (0, 0)$.}
    \label{fig:rastrigin_screenshot_row_all} 
\end{figure}

Figure \ref{fig:rastrigin_screenshot_row_all} shows that all three systems eventually form consensus at the global minimum. The swarm of the purely deterministic system steadily contracts toward the minimum without significant deformation of the initial distribution. The roughly square-shaped initialization from the uniform distribution \( \mathcal{U}([-5, 5]^2) \) is preserved over time.

In contrast, the stochastic system exhibits more exploration, particularly when the diffusion parameter $\sigma^2 = 2.1$. The particle distribution becomes visibly more dispersed at intermediate times ($\text{time} = 0.75$ and $\text{time} = 2.0$), indicating that more noise facilitates broader exploration of the domain. Still, the system converges to consensus by the final time point, since the system converges almost surely for every combination of $\lambda$ and $\sigma$ in accordance to Theorem \ref{thm:deterministic_system_exponential_stability}.

\smallskip
\smallskip

Since all methods converge to the same minimum, we now focus on the dynamics of consensus formation, i.e., how effectively consensus emerges over time. To quantify this, we consider the expected squared distance from the consensus manifold $\mathcal{G}^{\star}$, defined as  
\[
  \mathbb{E}(\|E_t\|^2) := \sum_{d=1}^{D} \mathbb{E}\big(\|E_t^{:,d}\|^2\big)  = 
  \sum_{d=1}^{D} \mathbb{E}\big(\|P Y_t^{:,d}\|^2\big),
\]
where $P = I_N - \tfrac{1}{N}\mathbf{1}_N\mathbf{1}_N^T$ and $Y_t = Z_t$ in the stochastic case or $Y_t = X_t$ in the deterministic case. Figure~\ref{fig:rastrigin_diameter_evolution} shows the evolution of $\mathbb{E}(\|E_t\|^2)$ over time for the same three scenarios as in Figure~\ref{fig:rastrigin_screenshot_row_all}. The expectation is approximated by a Monte-Carlo simulation with 1000 runs. For comparison, the dotted lines indicate the theoretical convergence rates from Theorem~\ref{thm:stochastic_system_exponential_stability}. Note that for the stochastic system with $\sigma^2$ violating the stability condition $2\lambda > \sigma^2$, no such rate is available.

\begin{figure}[h]
    \centering
    \includegraphics[width=0.7\textwidth]{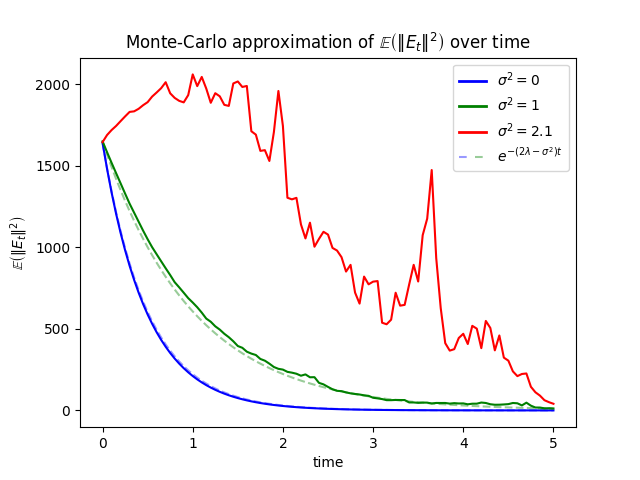}
    \caption{Monte-Carlo approximation over 1000 runs showing the evolution of the dynamics orthogonal to the consensus manifold $\mathcal{G}^{\star}$ for the purely deterministic system (\ref{eq:evolution_determinstic_system}), the stochastic system (\ref{eq:evolution_stochastic_system}) with $\sigma = 1$ (satisfying the stability condition $2 \lambda > \sigma^2$), and the stochastic system (\ref{eq:evolution_stochastic_system}) with $\sigma = \sqrt{2.1}$ (violating the stability condition). Dotted lines indicate the theoretical convergence rates from Theorem~\ref{thm:stochastic_system_exponential_stability}.}
    \label{fig:rastrigin_diameter_evolution}
\end{figure}

The purely deterministic system exhibits perfectly smooth exponential convergence, precisely matching the theoretical rate from Theorem~\ref{thm:stochastic_system_exponential_stability}. Since no randomness is present, no fluctuations occur. In contrast, the stochastic system with $\sigma^2 = 1$ shows small fluctuations, yet due to the moderate diffusion the Monte-Carlo average over 1000 runs closely follows the expected exponential rate from Theorem~\ref{thm:stochastic_system_exponential_stability}, resulting in an almost smooth convergence. For the stochastic system with $\sigma^2 = 2.1$, where the stability condition $2\lambda > \sigma^2$ is violated, randomness dominates: even after averaging over 1000 runs, pronounced fluctuations remain. At first, the strong diffusion causes the expected distance to consensus to increase, since the system explores the domain more widely. Later, however, the distance decreases again, and we still see convergence to consensus, but no longer at an exponential rate. This shows that large diffusion slows down convergence and prevents exponential stability, but consensus is still reached within the observed time horizon.

\smallskip
\smallskip

To investigate whether consensus formation behaves differently in the \textbf{isotropic} variant of CBO, we replace the anisotropic diffusion term $\sigma(Z_t^n - \nu_f^\alpha(Z_t))$ with the isotropic one $\sigma\|Z_t^n - \nu_f^\alpha(Z_t)\|_2$. We then repeat the simulations using $\sigma = \sqrt{0.5}$ and $\sigma = 1$ for the isotropic stochastic system. These values are motivated by the convergence analysis in \cite{fornasier_CBO_converge_globally}, where in the mean-field limit ($N \to \infty$, $\alpha \to \infty$) it was shown that under the isotropic stability condition
\begin{equation}\label{eq:isotropic_stability_constraint}
    2\lambda > D\sigma^2,
\end{equation}
the system converges exponentially fast with rate $2\lambda - D\sigma^2$. According to this criterion, the purely deterministic system and the case $\sigma = \sqrt{0.5}$ are exponentially stable, while $\sigma = 1$ violates the condition. Consequently, in Figure~\ref{fig:rastrigin_diameter_evolution}, the expected convergence rates (dotted lines) are shown only for the first two cases.

\begin{figure}[h]
    \centering
    \includegraphics[width=0.7\textwidth]{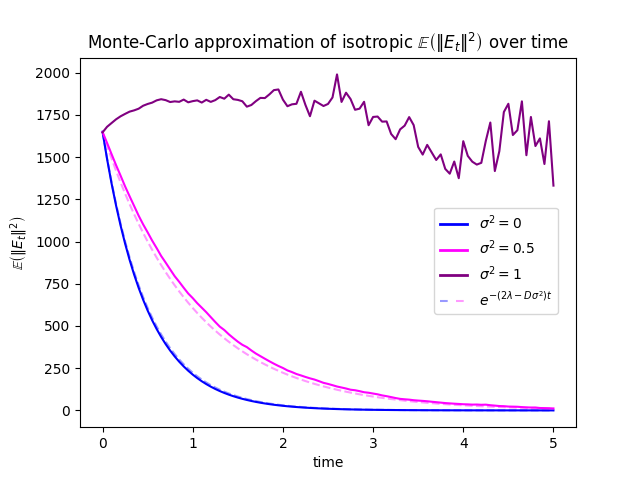}
    \caption{Monte-Carlo approximation over 1000 runs showing the evolution of the isotropic dynamics orthogonal to the consensus manifold $\mathcal{G}^{\star}$ for the purely deterministic system (\ref{eq:evolution_determinstic_system}), the isotropic stochastic system with $\sigma = \sqrt{0.5}$ (satisfying the isotropic stability condition $2 \lambda > D\sigma^2$), and the isotropic stochastic system with $\sigma = 1$ (violating the isotropic stability condition $2 \lambda > D\sigma^2$). Dotted lines indicate the theoretical convergence rates from \cite{fornasier_CBO_converge_globally}.}
    \label{fig:rastrigin_diameter_evolution}
\end{figure}

As we can see in Figure \ref{fig:rastrigin_diameter_evolution}, systems satisfying \eqref{eq:isotropic_stability_constraint} converge exponentially, matching the rates established in \cite{fornasier_CBO_converge_globally}. The system with $\sigma = 1$, which violates the stability condition, exhibits larger deviations from consensus due to stronger exploration. Within the observed time horizon, this system shows no clear convergence toward consensus.

These results indicate that the anisotropic variant generally achieves faster convergence to the minimum, even for parameters that no longer guarantee exponential stability. They also confirm that the mean-field convergence rates from \cite{fornasier_CBO_converge_globally} are approximately preserved for finite $N \ll \infty$.  

However, in the isotropic case, the diffusion term $\sigma\|Z_t^n - \nu_f^\alpha(Z_t)\|_2$ affects all dimensions jointly, depending on the positions of all particles. As a result, the approach used here for the anisotropic system cannot be directly applied, and extending the exponential stability analysis to the isotropic case is left for future work.

\subsection{Sensitivity of the convergence behavior}

We now examine how the parameters \(\alpha\), \(N\), and \(D\) affect the convergence behavior to consensus. In the following, we vary each of the three parameters individually while keeping the others constant. All simulations are averages based on 1000 independent runs. Before averaging, we limit the values of each individual run to a maximum threshold, preventing extreme outliers, particularly in large-noise systems, from dominating the observed behavior. For each configuration, we display the evolution of the dynamics orthogonal to the consensus manifold over time.

Each figure is structured into three subplots corresponding to different system. Plot (a) displays the purely deterministic system. Plot (b) uses \(\sigma = 1\), which satisfies the stability condition (\ref{eq:stability_condition}) from Theorem \ref{thm:stochastic_system_exponential_stability}. Plot (c) employs \(\sigma = \sqrt{2.1}\), which violates stability condition (\ref{eq:stability_condition}). Note that all three subplots share the same color scale to highlight differences in magnitude across the diffusion regimes. 

We start by changing the value of $\alpha$. It takes each value between 1 and 1001 with steps of 20 in between. Figure \ref{fig:rastrigin_diameter_by_alpha_MC} shows the results. 

\begin{figure}[h]
    \centering
    \includegraphics[width=1\textwidth]{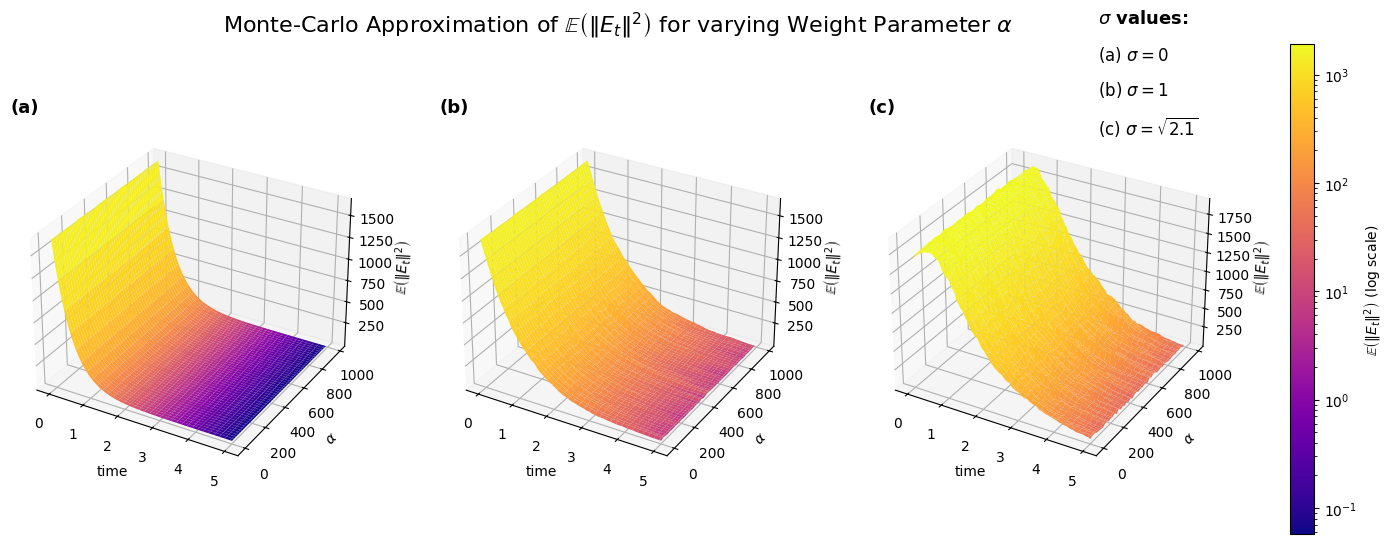}
    \caption{Monte-Carlo approximation over 1000 runs for varying $\alpha$ showing the evolution of the dynamics orthogonal to the consensus manifold $\mathcal{G}^{\star}$ for the (a) purely deterministic system (\ref{eq:evolution_determinstic_system}), the (b) stochastic system (\ref{eq:evolution_stochastic_system}) with $\sigma = 1$ (satisfying the stability condition $2 \lambda > \sigma^2$), and the (c) stochastic system (\ref{eq:evolution_stochastic_system}) with $\sigma = \sqrt{2.1}$ (violating the stability condition).}
    \label{fig:rastrigin_diameter_by_alpha_MC}
\end{figure}

In the deterministic setting (a), convergence to consensus appears smoothly and nearly identically across all values of \(\alpha\). This pattern persists in the large-diffusion cases (b) and (c), at least in expectation. The results suggest that the precise value of \(\alpha\) has only a minor influence on the overall convergence behavior.

\smallskip
\smallskip

We repeat the same procedure, this time varying only the number of particles \(N\) from 10 to 1010 in steps of 20. The results are displayed in Figure \ref{fig:rastrigin_diameter_by_n_particles_MC}.

\begin{figure}[h]
    \centering
    \includegraphics[width=1\textwidth]{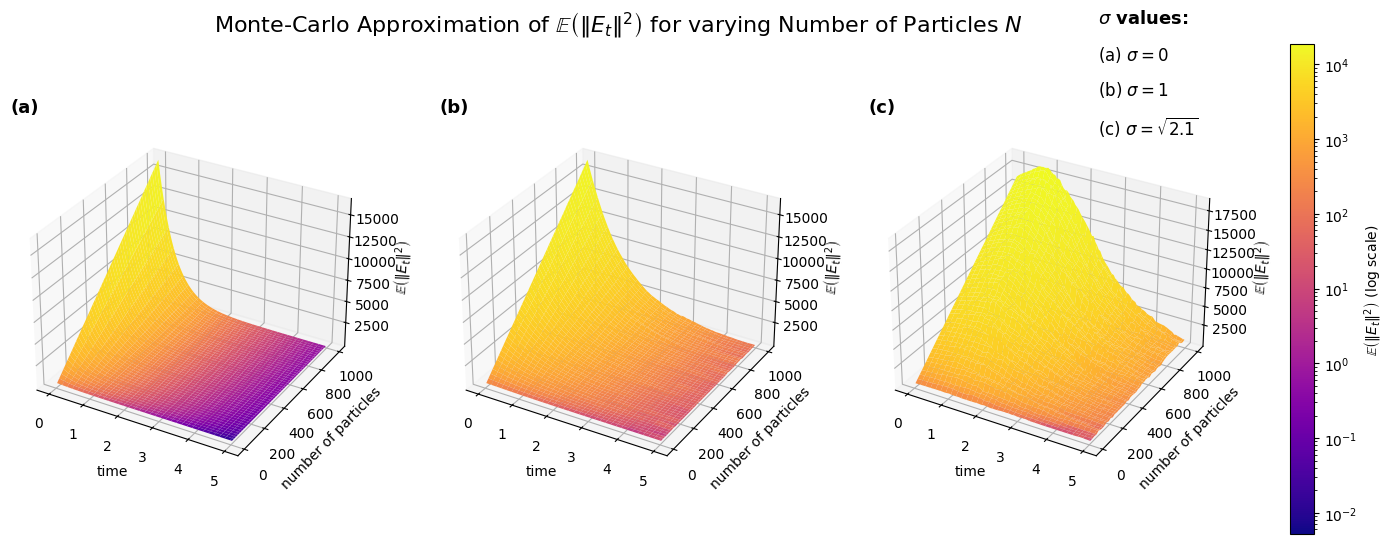}
    \caption{Monte-Carlo approximation over 1000 runs for varying number of particles $N$ showing the evolution of the dynamics orthogonal to the consensus manifold $\mathcal{G}^{\star}$ for the (a) purely deterministic system (\ref{eq:evolution_determinstic_system}), the (b) stochastic system (\ref{eq:evolution_stochastic_system}) with $\sigma = 1$ (satisfying the stability condition $2 \lambda > \sigma^2$), and the (c) stochastic system (\ref{eq:evolution_stochastic_system}) with $\sigma = \sqrt{2.1}$ (violating the stability condition).}
    \label{fig:rastrigin_diameter_by_n_particles_MC}
\end{figure}

In the purely deterministic system (a) and the system with small noise (b), increasing $N$ primarily enlarges the initial value of $\mathbb{E}(\|E_t\|^2)$, so convergence remains exponential but slows down due to the higher starting values. In contrast, in the system with large noise (c), larger $N$ not only raises the initial value of $\mathbb{E}(\|E_t\|^2)$ but also enhances exploration, causing the distance to consensus to increase temporarily before eventually decreasing. This suggests that using fewer particles can be more efficient in such scenarios.


\smallskip
\smallskip

We now turn to the question of how the system behaves as the problem dimension \(D\) increases. To investigate this, we vary \(D\) from 1 to 201 in steps of 10.

\begin{figure}[h]
    \centering
    \includegraphics[width=1\textwidth]{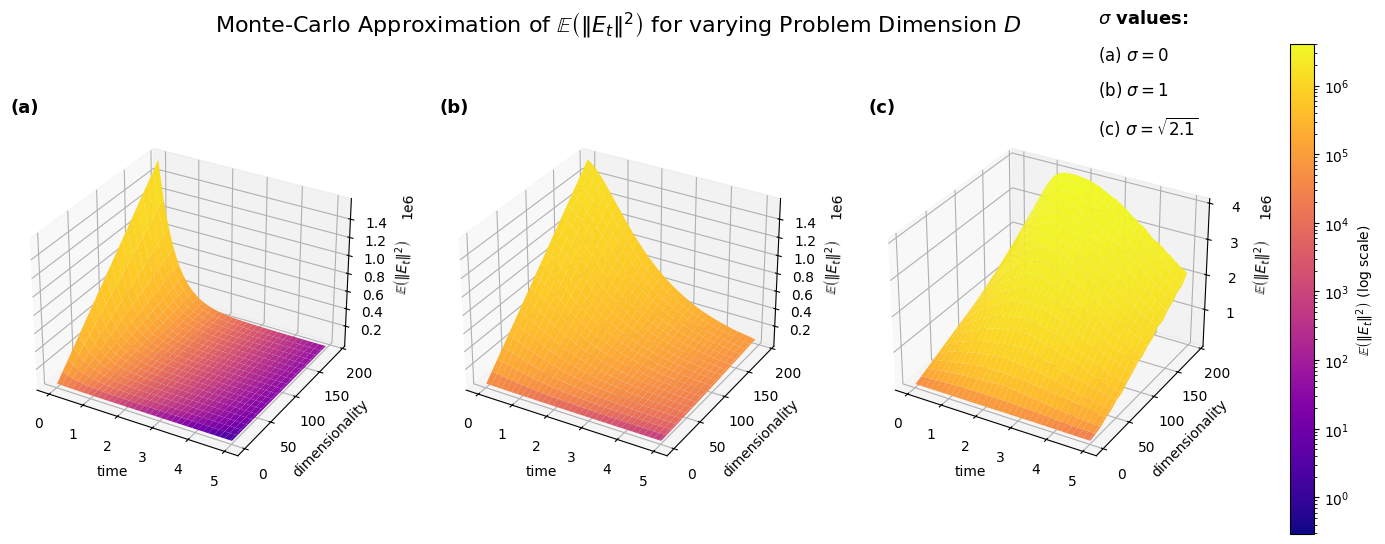}
    \caption{Monte-Carlo approximation over 1000 runs for varying $D$ showing the evolution of the dynamics orthogonal to the consensus manifold $\mathcal{G}^{\star}$ for the (a) purely deterministic system (\ref{eq:evolution_determinstic_system}), the (b) stochastic system (\ref{eq:evolution_stochastic_system}) with $\sigma = 1$ (satisfying the stability condition $2 \lambda > \sigma^2$), and the (c) stochastic system (\ref{eq:evolution_stochastic_system}) with $\sigma = \sqrt{2.1}$ (violating the stability condition).}
    \label{fig:rastrigin_diameter_by_dimensionality_MC}
\end{figure}

All three systems show that increasing \(D\) raises the initial distance to consensus, and in highly stochastic settings also enhances exploration. This illustrates how both high dimensionality and strong stochasticity can slow down consensus formation, which is consistent with prior findings \cite{CBO_adaptive_momentum, anisotropic_convergence_CBO}.

\smallskip
\smallskip

We emphasize that our analysis focuses solely on \textbf{whether consensus is achieved}, not on the \textbf{quality of the consensus point} as an optimizer, which is often investigated in other works like \cite{CBO_origin, fornasier_CBO_Sphere}. Our results show that small \(N\) and \(\sigma\) can lead to fast consensus formation, but this may result in convergence to suboptimal points. In general optimization tasks, a slow and exploratory phase can be beneficial, because the speed to consensus and solution quality do not necessarily coincide.

\subsection{Convergence Behavior of other Benchmark functions}

We also performed the same numerical simulations on two additional benchmark functions, which feature different landscape characteristics compared to the Rastrigin function.

\paragraph{Rosenbrock Function.} 
The Rosenbrock function \cite{test_functions} is a smooth test function with a narrow, curved valley and a global minimum at $\mathbf{X}^{\star} = \mathbf{1}_D$. It is given by
\begin{equation}\label{eq:rosenbrock}
    f_{\text{Rosenbrock}}(x) = \sum_{d=1}^{D-1} \left( (1 - x_d)^2 + 100(x_{d+1} - x_d^2)^2 \right),
\end{equation}
Within the CBO framework, it serves to evaluate the \textit{convergence efficiency} of the particle system.

\paragraph{Discontinuous Integrand.} 
The Discontinuous Integrand function \cite{test_functions} exhibits a steep exponential peak inside a hypercube and is zero elsewhere, creating a sharp discontinuity. This makes it suitable for testing \textit{stochastic navigation} capabilities of CBO particles in the presence of abrupt changes. It is defined as
\begin{equation}\label{eq:discontinous_integrand}
    f_{\text{disc}}(x) = 
    \begin{cases}
        - \exp\left( \sum_{d=1}^D 5 x_d \right), & \text{if } x_d \leq \frac{1}{2} \text{ for all } d = 1, \dots, D, \\
        0, & \text{otherwise}.
    \end{cases}
\end{equation}

In both cases, our simulations revealed behavior largely consistent with the results observed for the Rastrigin function, confirming the robustness of the CBO dynamics across different types of objective landscapes.

\bigskip

\section*{Acknowledgments}
This work was financially supported by the DFG Projects GO1920/11-1 and 12-1.

\bibliography{literature} 

\begin{thebibliography}{10}

\bibitem{stabilization_by_noise_Arnold}
{\sc L.~Arnold, H.~Crauel, and V.~Wihstutz}, {\em Stabilization of linear
  systems by noise}, SIAM Journal on Control and Optimization, 21 (1983),
  pp.~451--461.

\bibitem{wind_driven_optimization}
{\sc Z.~Bayraktar, M.~Komurcu, and D.~H. Werner}, {\em The wind driven
  optimization technique and its application in electromagnetics}, IEEE
  Transactions on Antennas and Propagation, 61 (2013), pp.~2745--2757.

\bibitem{metaheuristics}
{\sc C.~Blum and A.~Roli}, {\em Metaheuristics in combinatorial optimization:
  Overview and conceptual comparison}, ACM Comput. Surv., 35 (2001),
  pp.~268--308.

\bibitem{CBO_for_model_predictive_control}
{\sc G.~Borghi and M.~Herty}, {\em Model predictive control strategies using
  consensus-based optimization}, 2023.

\bibitem{multi_objective_CBO}
{\sc G.~Borghi, M.~Herty, and L.~Pareschi}, {\em An adaptive consensus based
  method for multi-objective optimization with uniform pareto front
  approximation}, Applied Mathematics \& Optimization, 88 (2023), p.~58.

\bibitem{constrained_CBO}
\leavevmode\vrule height 2pt depth -1.6pt width 23pt, {\em Constrained
  consensus-based optimization}, SIAM Journal on Optimization, 33 (2023),
  pp.~211--236.

\bibitem{carrillo_analytical_framework_CBO}
{\sc J.~A. Carrillo, Y.-P. Choi, C.~Totzeck, and O.~Tse}, {\em An analytical
  framework for consensus-based global optimization method}, Mathematical
  Models and Methods in Applied Sciences, 28 (2018), pp.~1037--1066.

\bibitem{carrillo_CBO_for_high_dimensional_ML_problems}
{\sc J.~Chen, S.~Jin, and L.~Lyu}, {\em A consensus-based global optimization
  method with adaptive momentum estimation}, 2020.

\bibitem{CBO_adaptive_momentum}
\leavevmode\vrule height 2pt depth -1.6pt width 23pt, {\em A consensus-based
  global optimization method with adaptive momentum estimation}, 2020.

\bibitem{laplace_principle}
{\sc A.~Dembo and O.~Zeitouni}, {\em Large deviations techniques and
  applications}, vol.~38, Springer Science \& Business Media, 2009.

\bibitem{ant_colony_optimization}
{\sc M.~Dorigo and C.~Blum}, {\em Ant colony optimization theory: A survey},
  Theoretical Computer Science, 344 (2005), pp.~243--278.

\bibitem{anisotropic_convergence_CBO}
{\sc M.~Fornasier, T.~Klock, and K.~Riedl}, {\em Convergence of Anisotropic
  Consensus-Based Optimization in Mean-Field Law}, Springer International
  Publishing, 2022, p.~738–754.

\bibitem{fornasier_CBO_converge_globally}
\leavevmode\vrule height 2pt depth -1.6pt width 23pt, {\em Consensus-based
  optimization methods converge globally}, SIAM Journal on Optimization, 34
  (2024), pp.~2973--3004.

\bibitem{fornasier_CBO_Sphere}
{\sc M.~Fornasier, L.~Pareschi, H.~Huang, and P.~S{\"u}nnen}, {\em
  Consensus-based optimization on the sphere: Convergence to global minimizers
  and machine learning}, Journal of Machine Learning Research, 22 (2021),
  pp.~1--55.

\bibitem{convergence_first_order_CBO}
{\sc S.-Y. Ha, S.~Jin, and D.~Kim}, {\em Convergence of a first-order
  consensus-based global optimization algorithm}, Mathematical Models and
  Methods in Applied Sciences, 30 (2020), pp.~2417--2444.

\bibitem{convergence_first_order_time_discrete_CBO}
{\sc S.-Y. Ha, S.~Jin, and D.~Kim}, {\em Convergence and error estimates for
  time-discrete consensus-based optimization algorithms}, Numerische
  Mathematik, 147 (2021), pp.~255--282.

\bibitem{git_Lypaunov_stability}
{\sc J.~Heieck}, {\em Repository for exponential stability of finite-n
  consensus-based optimization}.
\newblock
  \url{https://github.com/jaheieck/Exponential-Stability-of-Finite-N-Consensus-Based-Optimization},
  2025.
\newblock Accessed: 2025-10-22.

\bibitem{Euler_Maruyama}
{\sc D.~J. Higham}, {\em An algorithmic introduction to numerical simulation of
  stochastic differential equations}, SIAM Review, 43 (2001), pp.~525--546.

\bibitem{almost_sure_exponention_stability_linear_SDE}
{\sc D.~J. Higham, X.~Mao, and C.~Yuan}, {\em Almost sure and moment
  exponential stability in the numerical simulation of stochastic differential
  equations}, SIAM Journal on Numerical Analysis, 45 (2007), pp.~592--609.

\bibitem{hadamard_product}
{\sc R.~A. Horn and C.~R. Johnson}, {\em Matrix Analysis}, Cambridge University
  Press, 2nd~ed., 2012.

\bibitem{particle_swarm_optimization}
{\sc J.~Kennedy and R.~Eberhart}, {\em Particle swarm optimization}, in
  Proceedings of ICNN'95 - International Conference on Neural Networks, vol.~4,
  1995, pp.~1942--1948 vol.4.

\bibitem{stabilization_by_noise_khasminskii2011}
{\sc R.~Khasminskii}, {\em Stochastic stability of differential equations.
  {With} contributions by {G}. {N}. {Milstein} and {M}. {B}. {Nevelson}},
  vol.~66 of Stoch. Model. Appl. Probab., Berlin: Springer, 2nd completely
  revised and enlarged~ed., 2012.

\bibitem{luvizon_CBO_3D_Human_Pose}
{\sc D.~C. Luvizon, H.~Tabia, and D.~Picard}, {\em Consensus-based optimization
  for 3d human pose estimation in camera coordinates}, 2021.

\bibitem{stabilization_by_noise_Mao}
{\sc X.~Mao}, {\em Stochastic stabilization and destabilization}, Systems and
  Control Letters, 23 (1994), pp.~279--290.

\bibitem{sde_lyapunov_stability_mao}
\leavevmode\vrule height 2pt depth -1.6pt width 23pt, {\em Stochastic
  Differential Equations and Applications}, Horwood Publishing, Chichester, UK,
  2~ed., 2007.

\bibitem{rank_one_update}
{\sc C.~D. Meyer}, {\em Matrix Analysis and Applied Linear Algebra}, Society
  for Industrial and Applied Mathematics (SIAM), Philadelphia, PA, 2000.

\bibitem{exponential_stability_ODE_2}
{\sc R.~M. Murray, S.~S. Sastry, and L.~Zexiang}, {\em A Mathematical
  Introduction to Robotic Manipulation}, CRC Press, Inc., USA, 1st~ed., 1994.

\bibitem{ode_global_stability}
{\sc E.~Panteley, A.~Lor{\'i}a, and S.~Sukumar}, {\em Strict lyapunov functions
  for consensus under directed connected graphs}, in 2020 European Control
  Conference (ECC), 2020, pp.~1--6.

\bibitem{CBO_origin}
{\sc R.~Pinnau, C.~Totzeck, O.~Tse, and S.~Martin}, {\em A consensus-based
  model for global optimization and its mean-field limit}, Mathematical Models
  and Methods in Applied Sciences, 27 (2017), p.~183–204.

\bibitem{random_search}
{\sc L.~A. Rastrigin}, {\em The convergence of the random search method in the
  extremal control of a many-parameter system}, Automation and Remote Control,
  24 (1963), pp.~1337--1342.

\bibitem{ode_lyapunov_stability}
{\sc J.-J.~E. Slotine and W.~Li}, {\em Applied Nonlinear Control}, Prentice
  Hall, 1991.

\bibitem{random_search_convergence}
{\sc F.~J. Solis and R.~J.-B. Wets}, {\em Minimization by random search
  techniques}, Mathematics of Operations Research, 6 (1981), pp.~19--30.

\bibitem{test_functions}
{\sc S.~Surjanovic and D.~Bingham}, {\em Virtual library of simulation
  experiments: Test functions and datasets}.
\newblock Retrieved May 26, 2025, from \url{http://www.sfu.ca/~ssurjano}.

\bibitem{trends_in_CBO}
{\sc C.~Totzeck}, {\em Trends in Consensus-Based Optimization}, Springer
  International Publishing, Cham, 2022, pp.~201--226.

\bibitem{CBO_in_python}
{\sc I.~Tukh and K.~Riedl}, {\em cbo-in-python}.
\newblock \url{ https://github.com/Igor-Tukh/cbo-in-python}, 2022.

\bibitem{ode_dynamics_on_graphs}
{\sc D.~Weber, R.~Theisen, and S.~Motsch}, {\em Deterministic versus stochastic
  consensus dynamics on graphs}, Journal of Statistical Physics, 176 (2019),
  pp.~40--68.

\bibitem{multi_agent_lyapunov}
{\sc H.~Zhang, Z.~Li, Z.~Qu, and F.~L. Lewis}, {\em On constructing lyapunov
  functions for multi-agent systems}, Automatica, 58 (2015), pp.~39--42.

\end{thebibliography}

\end{document}